\documentclass[11pt,a4paper,final]{article}

\RequirePackage[backend=biber,style=authoryear,natbib,
  uniquename=false, maxbibnames=4]{biblatex}
\RequirePackage{csquotes}
\usepackage{authblk}
\usepackage{bm}
\usepackage{amsmath}    %
\usepackage{amsfonts}   %
\usepackage{amssymb}    %
\usepackage{amsthm}     %
\usepackage{mathrsfs}   %
\usepackage{parskip}    %
\usepackage{calc}       %
\usepackage{enumitem}   %
\usepackage{graphicx}   %
\usepackage{lpic}       %
\usepackage{verbatim} %
\usepackage[dvipsnames]{xcolor}
\usepackage{tcolorbox}
\usepackage[english]{babel}
\usepackage{breakurl}
\RequirePackage{hyperref}
\hypersetup{
    colorlinks=true, %
    linktoc=all,     %
    linkcolor=blue,  %
    citecolor=blue,
    urlcolor = blue,
    }
\usepackage{cleveref}
\crefformat{assumption}{assumption #2#1#3}
\crefrangeformat{assumption}{assumptions~#3#1#4 to~#5#2#6}
\crefmultiformat{assumption}{assumptions~#2#1#3}{ and~#2#1#3}{, #2#1#3}{ and~#2#1#3}
\crefrangemultiformat{assumption}{assumptions~#3#1#4 to #5#2#6}{ and #3#1#4 to #5#2#6}{, #3#1#4 to #5#2#6}{ and #3#1#4 to #5#2#6}

\setlength{\topmargin}{-7mm}
\setlength{\oddsidemargin}{6mm}
\setlength{\evensidemargin}{6mm}
\setlength{\textheight}{235mm}
\setlength{\textwidth}{147mm}

\newlength{\vslength}
\setlength{\vslength}{2.2ex plus0.3ex minus0.2ex}

\newcommand{\eg}{{\it e.g.}}

\newcommand{\etc}{{\it etcetera}}

\newcommand{\RR}{{\mathbb R}}

\newcommand{\scrB}{{\mathscr B}}

\newcommand{\scrG}{{\mathscr G}}

\newcommand{\scrP}{{\mathscr P}}

\newcommand{\scrX}{{\mathscr X}}

\DeclareMathOperator{\sL}{\mathcal{L}}

\DeclareMathOperator{\sO}{\mathcal{O}}
\DeclareMathOperator{\sP}{\mathcal{P}}

\newcommand{\samplen}{{X^{n}}}

\newcommand{\realizationn}{{x^{n}}}

\newcommand{\eps}{{\varepsilon}}

\newcommand{\tht}{{\theta}}

\newcommand{\Tht}{{\Theta}}

\newcommand{\set}[1]{\left\{ #1 \right\}}

\newcommand{\ft}[2]{{\textstyle{\frac{#1}{#2}}}}

\newcommand{\conv}[1]%
  {{\mathrel{\,\xrightarrow{\widthof{\,#1\,}}\,}}}
\newcommand{\convas}[1]%
  {{\mathrel{\,\xrightarrow{\widthof{\,#1\text{-a.s.}\,}}\,}}}
\newcommand{\convprob}[1]%
  {{\mathrel{\,\xrightarrow{\widthof{\,#1\,}}\,}}}
\newcommand{\convweak}[1]%
  {{\mathrel{\,\xrightarrow{\widthof{\,#1\text{-w.}\,}}\,}}}

\newcommand{\twobytwo}[4]%
  {\left(\begin{array}{cc} #1 & #2 \\ #3 & #4 \end{array}\right)}
\newcommand{\twovec}[2]%
  {\left({\begin{array}{c} #1\\#2 \end{array}}\right)}

\newcommand{\ceiling}[1]{\left\lceil #1 \right\rceil}

\newcommand{\floor}[1]{\left\lfloor #1 \right\rfloor}

\newcommand{\rh}[1]{\left(#1\right)}
\newcommand{\vh}[1]{\left[#1\right]}
\DeclareMathOperator{\en}{\quad \text{and}\quad}
\DeclareMathOperator{\weg}{\backslash}
\renewcommand{\qedsymbol}{$\Box$}
\newcommand{\closebox}{\hfill\qedsymbol}

\newcommand{\expa}[1]{\exp\left(#1\right)}

\newcommand{\weghalen}[1]{}

\newtheoremstyle{customtheorem}%
  {0.5em}%
  {0.2em}%
  {\itshape}%
  {}%
  {\scshape}%
  {}%
  {1ex}%
  {}%

\theoremstyle{customtheorem}
\newtheorem{theorem}{Theorem}[section]
\newtheorem{lemma}[theorem]{Lemma}
\newtheorem{proposition}[theorem]{Proposition}
\newtheorem{corollary}[theorem]{Corollary}
\newtheorem{definition}[theorem]{Definition}
\newtheorem{assumption}[theorem]{Assumption}
\newtheoremstyle{customremark}%
  {0.5em}%
  {0.2em}%
  {}%
  {}%
  {\scshape}%
  {}%
  {1ex}%
  {}%

\theoremstyle{customremark}
\renewenvironment{proof}{\par\noindent{\scshape Proof}\;}{\hfill\qedsymbol\par}
\newtheorem{remark}[theorem]{Remark}
\newtheorem{example}[theorem]{Example}

\newcommand{\extra}[1]{{}}

\addbibresource{sbm1.bib}
\addbibresource{Bieb.bib}

\title{%
Uncertainty quantification in the stochastic block model with an unknown number of classes}
\author[1]{J. van Waaij\thanks{The first author is supported by the University of Padova under the STARS Grant.
}}
\author[2]{B.J.K. Kleijn}
\affil[1]{Department of Statistics,
	University of Padova}
\affil[2]{Korteweg-de~Vries Institute for Mathematics,
	University of Amsterdam}
\affil[1]{\href{mailto:jvanwaaij@gmail.com}{jvanwaaij@gmail.com}}
\affil[2]{\href{mailto:B.J.K.Kleijn@uva.nl}{b.j.k.kleijn@uva.nl}}
\date{\today}
\begin{document}
	\sloppy
\maketitle

\begin{abstract}\noindent
We study the frequentist properties of Bayesian statistical inference for the stochastic block model, with an unknown number of classes of varying sizes. We equip the space of vertex labellings with a prior on the number of classes and, conditionally, a prior on the labels. The number of classes may grow to infinity as a function  of the number of vertices, depending on the sparsity of the graph.  
We derive  non-asymptotic posterior contraction rates of the form \(P_{\theta_{0,n}}\Pi_n(B_n\mid X^n)\le \eps_n\), where \(X^n \) is the observed graph, generated according to \( P_{\theta_{0,n}}\),  \(B_n\) is either \(\set{\theta_{0, n}}\) or, in the very sparse case, a ball around \(\theta_{0,n}\) of known extent, and \(\eps_n\) is an explicit rate of convergence. 

  These results enable conversion of credible sets to confidence sets. In the sparse case, credible tests are shown to be confidence sets. In the very sparse case, credible sets are enlarged to form confidence sets. Confidence levels are explicit, for each \(n\), as a function of the credible level and the rate of convergence.
  Hypothesis testing between the number of classes is considered with the help of posterior odds,  and is shown to be consistent. Explicit upper bounds on errors of the first and second type and an explicit lower bound on the power of the tests are given.
\end{abstract}

\section{Communities in random graphs}
\label{sec:intro}

Networks are more present than ever in history. The emergence of the Internet, which is only about 30 years old, is clearly one of the most eye-catching examples, but also developments in biology giving rise to enormous  networks, all waiting for the  statistician to be analysed.

Under a network or graph we understand a collection of vertices (also called nodes) and edges between vertices, which can be directed or undirected. Very often data is associated with the vertices and edges. For example in the Facebook network, the vertices are users and edges are formed when two users are friends. With the vertex the user's name, age, etc. are associated,  and with the edges for example the date that the connected users became friends could be stored.

There are many things that can be 
measured from a network, however, 
the most intrinsic to a graph is its 
geometric form, emerging from the 
connections between the vertices. One 
interesting question that may arise is, are there communities in a 
graph? Communities are generally understood to be sets of 
vertices that are more densily connected among each other, than to vertices outside the set. 
Communities are an indication of shared common properties, for example in the yeast protein-protein interaction network, communities (so-called ``functional modules'') are ``cellular entities that perform certain biological functions, which are relative independent of each other'' (\cite{chenyuan2006}).

We use the stochastic block model (SBM), as a mathematical model for communities in a network. 
The SBM was first considered in \cite{Holland83}. In the SBM we observe a 
graph $X^n$ with \(n\) vertices, %
in
which vertices belong to one of a finite number of classes, while edges
occur independently, with probabilities that depend on the classes
of the vertices they connect. 

SBMs have many applications in science and machine learning applications. One interesting example is  modelling gene expression (\cite{Clineea2007}). Another example is the already mentioned yeast protein-protein interaction network studied in \cite{chenyuan2006}. More examples and references of applications are provided in the first section of \cite{Abbe18}. 

There is now a substantial body of 
literature on the SBM that deals with 
recovery of communities in a 
network. %
Transition phases for the SBM for different number of blocks are studied in \cite{abbesandon2015}, \cite{abbesandon2018}, and \cite{zhangzhou2016}.
Rates of posterior convergence for SBMs are examined in \cite{mariadassoumatias2015}, \cite{pasvaart2018}, \cite{ghoshea2019},  and  \cite{gengbhattacharyapati2019}. A Bayesian framework for estimating the parameters of a SBM (number of classes, connecting probabilities) is considered in \cite{gaovaartzhou2015}.  For a comprehensive overview one consults \cite{Abbe18}. 

 Recent work mainly considers the SBM under the assumption that the number of blocks is known, or unknown but bounded. For example \cite{zhangzhou2016}, allows the number of blocks to be as large as \(\sO(n/\log n)\), but still assumes it to be known.  \cite{gengbhattacharyapati2019} allows the unknown number to be two or three, but state that the general problem (four or more classes) is unsolved.
 
 In the non-Bayesian approach this is less a problem, as one can first estimate the parameters of the SBM (using  for instance one of the algorithms mentioned in \cite[sec. 7]{Abbe18}), and then recover the clustering. From a Bayesian perspective this is unnatural (unless one uses an empirical Bayes approach). Here we give a partial answer to this problem. We allow the number of blocks to be unknown and even as large as \(n^\alpha,\) for some \(0<\alpha<1/2\) (in the dense phase, see \cref{ex:selectdense}).

 This also gives a partial answer to the open problem in \cite[sec. 8]{Abbe18}, to whether exact recovery is possible in classes of sub-linear size, in this case of size \(n^{1-\alpha}\). Another question that he raises (p. 4) and tries to find an answer to is:
 \begin{quote}
 	Are there really communities? Algorithms may output community structures, but are these meaningful or artefacts?
 \end{quote}
We do this here in several ways. One is by providing confidence sets for our estimators, and another is by providing frequentist guarantees for testing with posterior odds, whether the data comes from a Erd\H os-Ren\`\i\ graph model (there are no communities), or a SBM (there are communities). 

\paragraph{Outline of our results}

We study posterior rates of convergence for large classes of priors. Posterior rates of convergence are balls \(B_n\) of labellings centred around the true labelling \(\theta_{ 0,n}\) of a certain radius \(r_n\) (in a metric to be specified) and a sequence \((\eps_n)\) such that \[
P_{\theta_{ 0,n}}\Pi^n (B_n\mid X_n )\ge 1-\eps_n. 
\]
For graphs that are not too sparse, we take \(B_n = \set{\theta_{ 0,n}}\), which leads to exact detection. 

This unusual precise description of 
the posterior convergence rates 
with \(\eps_n\) and \(r_n\) allows us 
to construct confidence sets from 
the credible sets and to derive explicit bounds on the errors for testing with posterior odds. 

In the  sparse and dense cases, credible sets 
are shown to be confidence sets 
with an exact confidence level, which is a function of the credible level and the posterior rate of convergence. In the very sparse 
case, credible 
sets are enlarged to convert them into confidence sets. 

For symmetric testing with posterior odds, we show that posterior convergence in a parameter set \(A_n\) allows us to consistently test between \(A_n\) and another disjoint set \(B_n\). This enables us for instance to test for the number of classes and in particular to test between the SBM and the Erd\H os-R\'enyi graph model (basically the SBM with one class).

In the next section we describe our model. In \cref{sec:SBMselect,sec:posteriorconsistencyataparameter} we derive posterior convergence and apply that to some interesting examples. Confidence sets are examined in \cref{sec:examplecrediblesets} and symmetric testing in \cref{sec:hypothesistesting}. Proofs are mostly defered to  \cref{app:PBMtests,sec:aux}. A brief review of the most relevant Bayesian theory can found in \cref{app:defs}. Notation and conventions are also explained in this appendix.

\section{The stochastic block model}
\label{sec:sbm}

In a SBM for a random graph $X^n$ of order $n$,
each vertex is assigned to a class through an
unobserved \emph{class assignment vector} $\theta'_n$.
The space in which the random graph $X^n$ takes its values
is denoted $\scrX_n$ (\eg\ represented by its (random) adjacency matrix
with entries $\{X_{ij}:1\leq i<j\leq n\}$).
Each vertex belongs to a class and any edge occurs (independently
of others) with a probability depending on the classes
of the vertices that it connects. 
We study 
the \emph{planted multi-section model}, in which the vertices
are divided into $\ell\geq1$ groups of $1\le m_{n,1,1}\le \ldots \le m_{n,\ell,\ell}$ vertices,
for a total of $n=m_{n,\ell,1}+\ldots+m_{n,\ell,\ell}$ vertices (with labels $1,\ldots,\ell$). The corresponding \(\ell\)-vector is denoted by \(\bm m_{n,\ell}=(m_{n,\ell,1},\ldots,m_{n,\ell,\ell})\). 
We define the set $\Theta'_{\bm m_{n,\ell}}$ to be the subset of 
$\{1,\ldots,\ell\}^{n}$ consisting of all class assignment vectors
$\theta'=(\theta_{1}',\ldots,\theta_{n}'),\) with labels \( \theta_i' 
\in\{1,\ldots,\ell\}, i=1,\ldots, n$, that have exactly  \(m_{n,\ell,1}\) elements of one label, and  \(m_{n,\ell,2}\) elements of another label, \etc.

In the SBM for a graph $X^n$ with
class assignment vector $\theta'_n\in\Theta_n'$,  the probability
of an edge occurring between vertices $1\leq i,j\leq n, i\neq j$ is
denoted $Q_{i,j;n}$ and depends on $n$, $\theta'_{n,i}$
and $\theta'_{n,j}$ only. If edge probabilities vary (that is,
if edge probabilities within different classes vary
or if, for example, the edge probability between classes one
and two is different from that between three and four) vertices
are classified correctly by analysis of their asymptotic degrees
\citep{channarond2012}. In the planted multi-section
model it is assumed that the probability that an edge occurs,
depends only on whether the vertices it connects belong to
the same class or to different classes: the probability
of an edge \emph{within the same class} is denoted $p_n\in(0,1)$;
the probability of an edge \emph{between classes} is denoted
$q_n\in(0,1)$,
\begin{equation}
  \label{eq:pms}
  Q_{i,j;n}(\theta')=\begin{cases}
    \,\,p_n,&\quad\text{if $\theta_{i}'=\theta_{j}'$,}\\
    \,\,q_n,&\quad\text{if $\theta_{i}'\neq\theta_{j}'$.}
  \end{cases}
\end{equation}

Note that if $p_n=q_n$, $X^n$ is a Erd\H os-R\'enyi graph and the class assignment $\theta'_n\in\Theta'_n$ is not
identifiable. Indeed asymptotic proximity to the Erd\H os-R\'enyi
graph (in the sense that the difference between $p_n$ and $q_n$ decreases too fast to
zero as $n\to\infty$) can make consistent community
detection impossible, see \cite{Mossel16} for example in the case of two blocks. 
We distinguish between the following three cases:\begin{description}
	\item[Dense phase:] The average degree is proportional to \(n\), so \(\liminf_{n\to\infty}p_n>0\) and \(\liminf_{n\to\infty}q_n>0\).\item[Chernoff-Hellinger phase:] The average degrees grows logarithmically. So \(p_n=\frac{a_n\log n}n\) and \(q_n=\frac{b_n\log n}n\) for certain bounded positive sequences so that \(\liminf_{n\to\infty}a_n>0\) and \(\liminf_{n\to\infty} b_n>0\).
	\item[Kesten-Stigum phase:] The average degree is bounded from below, so \(p_n=c_n/n\) and \(q_n=d_n/n\), for bounded sequences of positive numbers so that \(\liminf_{n\to\infty}c_n>0\) and \(\liminf_{n\to\infty} d_n>0\).
\end{description}
The ability to detect the correct labelling, depends of course on the differences between \(p_n, q_n,\) and  \(a_n, b_n\) and \(c_n, d_n\), respectively. Exact tresholds for these are studied in \cite{abbesandon2015}, and \cite{abbesandon2018}.

Identifiability is also lost due to invariance of the model
under permutation of class labels: for example with $\ell=3$, the class assignment vectors $\theta'_1=(112233)$,
$\theta'_2=(223311)$ and $\theta'_3=(331122)$ all
give rise to the same distribution for $X^6$. This is expressed via an
equivalence relation $\sim$ on $\Tht'_{n,\ell}.$ Two labels \(\theta_n',\theta_n''\) are said to be equivalent, if there is a permutation \(\pi\) of the labels \(\set{1,\ldots, \ell}\) such that for all $1\leq i\leq n$,
\[
  \theta'_{1,i}=\theta''_{2,\pi(i)}.
\]

We define for an \(\ell\)-vector of class sizes \(\bm m_{n,\ell}=(m_{n,\ell,1},\ldots, m_{n,\ell,\ell})\) satisfying, \(1\le m_{n,\ell,1}\le \ldots\le m_{n,\ell,\ell}\) and \(\sum_{i=1}^\ell m_{n,\ell,i}=n\), the model $\Tht_{\bm m_{n,\ell}}=\Tht'_{\bm m _{n,\ell}}/\sim$, which can be identified with the space of
all partitions of $n$ elements into $\ell$ sets of sizes $m_{n,\ell, 1},\ldots, m_{n,\ell,\ell}$.
One can see that \cref{eq:pms} does not depend on the chosen representation \(\theta'\) of \(\theta,\) hence we may define \begin{equation}\label{eq:pmsundepofrepr}
Q_{i,j,n}(\theta)=Q_{i,j,n}(\theta').
\end{equation} 

In our set-up, we have to put some constraints on the \(\bm m_{n,\ell }\). 
Hence, for given \(\ell\), we let \(M_{n,\ell}\) be the set of all allowed \(\ell\)-vectors \(\bm m_{n,\ell}\). By \(\sL_n\) we denote all \(\ell\) for which \(M_{n,\ell}\) is not empty.
We define \(M_n=\bigcup_{\ell=1}^n M_{n,\ell}\).
  We set \[
\Theta_{n,\ell}=\bigcup_{\bm m_{n,\ell}\in M_{n,\ell}}\Theta_{\bm  m_{n ,\ell}}.
\]
The full parameter space is denoted \[
\Theta_n=\bigcup_{\bm m_{n}\in M_n}\Theta_{\bm m_{n}}=\bigcup_{\ell\in \sL_n}\Theta_{n,\ell}. 
\]

For \(\ell\in \sL_n\), we define 
 \[m_{n,\ell,\min}= \min_{\bm m _{n,\ell}\in M_{n,\ell}} m_{n,\ell,1},\quad\text{and}\quad m_{n,\ell,\max}= \max_{\bm m _{n,\ell}\in M_{n,\ell}}m_{n,\ell,\ell}.\]
  (Recall that \(m_{n,\ell,1}=\min_{1\le i\le \ell}m_{n,\ell,i}\) and \(m_{n,\ell,\ell}=\max_{1\le i\le \ell}m_{n,\ell,i}\).)
 
It is noted explicitly that \(\ell=1\) and \(\bm m_{n,1}=(n)\) is also allowed, which allows us to test between the Erd\H os-R\'enyi graph model and a SBM with at least two classes. In case \(\ell=1\), \(\Theta_{\bm  m_{n,1 }}'\) consist of only one element: the \(n\)-vector \((1,\ldots, 1)\).

In order to distinguish between the number of classes, in our approach, the minimum and maximum sizes of the classes
are required to adhere to the following assumption:
\begin{assumption}\label{ass:assumptiononthesizeoflabelsets}
For all \(\ell_1,\ell_2\in\sL_n\), whenever \(\ell_1< \ell_2\),   \(m_{n,\ell_1,\min}\ge m_{n,\ell_2,\max}\).  
\end{assumption}
This assumption organises class sizes for decreasing number of classes in consecutive intervals, concentrated around the mean number of elements 
 \(m:=n/\ell\) per class. For instance, if we impose an upper bound \(L_n\) for \(\ell\) given \(n\), then any vector 
 \(\bm m_{n,\ell}\) that satisfies \(n/\ell -\frac14 \frac n{L_n^2}\le m_{n,\ell,i}\le n/\ell+\frac14\frac n{L_n^2}\), satisfies 
 \cref{ass:assumptiononthesizeoflabelsets},
 see 
 \cref{eq:testingpower}. A condition like this is quite common in the literature, see for instance \cite[p. 2254]{zhangzhou2016} or the consistency results in \cite[assumption (A1), p. 897]{gengbhattacharyapati2019}.  The difference \(m_{n,\ell_1,\min} - m_{n,\ell_2,\max}\) determines the power to test between \(\Theta_{n,\ell_1}\) and \(\Theta_{n,\ell_2}\), see \cref{thm:posteriorodds} in combination with \cref{prop:postconvThetanl}.

In the planted multi-section model one observes a sequence of
graphs $X^n$ of order $n$ and the statistical question
is to reconstruct the unobserved class assignment vectors $\theta_n$
\emph{consistently}, that is, correctly with probability growing
to one as $n\to\infty$.

\begin{definition}
\label{def:exact}
Let $\theta_{0,n}\in\Theta_n$ for all $n\geq1$ be given. An
estimator sequence $\hat{\theta}_n:\scrX_n\to\Theta_n$ is said
to \emph{recover the class assignment $\theta_{0,n}$ exactly} if,
\[
  P_{\theta_{0,n}}\bigl(\hat{\theta}_n(X^n)=\theta_{0,n}\bigr)\longrightarrow1,
\]
as \(n\to\infty\). That is,  $\hat{\theta}_n$ is the true class assignment (up to a permutation) 
with high probability. 
\end{definition}

In the very sparse regime where the average degree is large enough, or grows arbitrarily slowly to infinity, exact recovery is not possible, however weaker forms of recovery are still possible, for which we define: 
\begin{definition}
\label{def:almostexact}
Let $\theta_{0,n}\in\Theta_n$ for all $n\geq1$ be given. Let  $\hat{\theta}_n:\scrX_n\to\Theta_n$ be an estimator and  \(s_n\) be an integer such that there are representations \( \theta_{0,n}'\) of \( \theta_{0,n}\) and \(  \hat{\theta}_n'\) of \(\hat{\theta}_n\) such that 
\[
  P_{\theta_{0,n}}\rh{\#\set{i:\hat{\theta}_{n,i}'=\theta_{0,n,i}'}\ge s_n}\to1,\quad\text{as}\quad n\to\infty.
\]
that is, if $\hat{\theta}_n$ recovers \(s_n\) labels correctly, with high probability. 
When \(s_n/n\to 1\), we say that \(\hat\theta_n\)  \emph{recovers the class assignment $\theta_{0,n}$ almost exactly}, that is, the estimator recovers the labelling correct, up to an arbitrarily small fraction and up to a permutation of the labels. When \(s_n/n\ge a_n>1/L_n\), for some deterministic sequence \(a_n\), where \(L_n\) is the maximum number of classes in \(\Theta_n\), then we say that \(\hat \theta_n\) recovers the labelling partially. Which means that \(\hat\theta_n\) performs on average better than a random guess. 
\end{definition}

Below we specialize to the Bayesian approach in the planted
multi-section model: with given $n\geq1$, $\theta_n\in\Tht_n$ and
random $X^n$, the likelihood is given by,
\[
  p_{\theta_n}(X^n)=\prod_{1\leq i<j\leq n}
    Q_{i,j;n}(\theta_n)^{X_{ij}}(1-Q_{i,j;n}(\theta_n))^{1-X_{ij}},
\]
where  \(Q_{i,j,n}\) is defined in \cref{eq:pmsundepofrepr}. 

We make the following convenient assumption on the prior (which  always holds after removing parameters with zero prior mass from the parameter space). 
\begin{assumption}\label{eq:prior}
The prior mass function \(\pi_n\) of the prior \(\Pi_n\) on \(\Theta_n\) satisfies \(\pi_n(\theta_n)>0\), for all \(\theta_{ n}\in\Theta_n\). 

\end{assumption}

The posterior distribution of a subset \(S_n\subseteq \Theta_n\) is given by
\[
\Pi_n(S_n\mid X^n)=\frac{\sum_{\theta_n\in S_n}\pi_n(\theta_n) p_{\theta_n}(X^n) }{\sum_{\theta_n\in \Theta_n} \pi_n(\theta_n)p_{\theta_n}(X^n) }.
\]

\newcommand{\upperboundposteriorS}{2
	\rh{
		\frac{
			\pi_n(S_n)
		}{
		\pi_n(\theta_{0,n})}\bigvee |S_n|
	} \rho(p_n,q_n)^{B_n}
}
\begin{proposition}
	\label{prop:postconvset} For fixed \(n\), consider an prior probability mass function \(\pi_n\) on \(\Theta_n\) satisfying \cref{eq:prior}. 
	Suppose that for some \(\theta_{0, n}\in \Theta_n\), we observe a graph
	$X^n$ with $n$ vertices, distributed according to 
	$P_{\theta_{0,n}}.
	$
Let \(S_n\subseteq\Theta_n\weg\set{\theta_{ 0,n}}\) be non-empty.
For \(\theta_{ n}\in S_n\), and representations \(\theta_{ 0,n}'\in \theta_{ 0,n}\) and \(\theta_{ n}'\in \theta_{ n}\) define \[
\begin{split}
D_{1,n}(\theta_n)&=\{(i,j)\in\{1,\ldots,n\}^2:\,i<j,\,
\theta_{0,n,i}'=\theta_{0,n,j}',\,
\theta_{n,i}'\neq\theta_{n,j}'\},\\
D_{2,n}(\theta_n)&=\{(i,j)\in\{1,\ldots,n\}^2:\,i<j,\,
\theta_{0,n,i}'\neq\theta_{0,n,j}',\,
\theta_{n,i}'=\theta_{n,j}'\}.
\end{split}
\]
 Then \(D_{1,n}(\theta_n)\) and \(D_{2,n}(\theta_n)\) are well-defined and disjoint, and if 
 \[
 0<B_n\le \inf_{\theta_n\in S_n}|D_{1,n}(\theta_n)\cup D_{2,n}(\theta_n)|,
 \]then  
\begin{equation}
	\label{eq:ordervstestpwr}
	P_{\theta_{0,n}}
	\Pi_n\bigl(S_n\bigm|X^n\bigr)
	\le \upperboundposteriorS.
	\end{equation}
\end{proposition}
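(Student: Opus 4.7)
My proof plan begins by disposing of the elementary claims: $D_{1,n}(\theta_n)$ and $D_{2,n}(\theta_n)$ are defined in terms of the propositions ``$\theta'_{0,n,i}=\theta'_{0,n,j}$'' and ``$\theta'_{n,i}\ne\theta'_{n,j}$'' (and vice versa), each of which is invariant under permutation of the labels $\{1,\dots,\ell\}$; hence the sets are the same for any choice of representatives and thus well-defined on the quotient $\Theta_n$. Disjointness is immediate: membership in $D_{1,n}$ demands $\theta_{n,i}'\ne\theta_{n,j}'$ while $D_{2,n}$ demands the opposite, a contradiction.

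The next observation, which is the engine of the proof, is that on pairs $(i,j)\notin D_{1,n}(\theta_n)\cup D_{2,n}(\theta_n)$ one has $Q_{i,j;n}(\theta_n)=Q_{i,j;n}(\theta_{0,n})$, because either both parameters label $i,j$ as same-class (value $p_n$) or both label them different-class (value $q_n$). Consequently, by the product form of $p_{\theta_n}$ and direct computation on the two-point Bernoulli alternatives $(p_n,q_n)$, the Hellinger affinity factorises as
\begin{equation*}
\int\sqrt{p_{\theta_n}p_{\theta_{0,n}}}\,\mathrm{d}\mu=\rho(p_n,q_n)^{|D_{1,n}(\theta_n)\cup D_{2,n}(\theta_n)|}\le \rho(p_n,q_n)^{B_n},
\end{equation*}
since $0<\rho(p_n,q_n)\le 1$ and $B_n$ is a uniform lower bound on the exponent.

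The posterior bound is then obtained by the usual test-based decomposition. I lower-bound the denominator of $\Pi_n(S_n\mid X^n)$ by keeping only the term $\theta_{0,n}$, so that
\begin{equation*}
\Pi_n(S_n\mid X^n)\le \frac{1}{\pi_n(\theta_{0,n})}\sum_{\theta_n\in S_n}\pi_n(\theta_n)\frac{p_{\theta_n}(X^n)}{p_{\theta_{0,n}}(X^n)}.
\end{equation*}
For each $\theta_n\in S_n$ I use the likelihood-ratio test $\phi_{\theta_n}=\mathbf{1}\{p_{\theta_n}(X^n)\ge p_{\theta_{0,n}}(X^n)\}$ and set $\phi_n=\max_{\theta_n\in S_n}\phi_{\theta_n}$. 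Splitting $1=\phi_n+(1-\phi_n)$ inside $P_{\theta_{0,n}}\Pi_n(S_n\mid X^n)$, on $\{\phi_n=1\}$ I bound the posterior by $1$ and control the probability by a union bound plus Markov (or Bhattacharyya) applied to $\sqrt{p_{\theta_n}/p_{\theta_{0,n}}}$, yielding at most $|S_n|\,\rho(p_n,q_n)^{B_n}$. On $\{\phi_n=0\}$ the inequality $p_{\theta_n}(X^n)/p_{\theta_{0,n}}(X^n)<1$ allows the standard bound $x\le \sqrt{x}$, so each summand becomes $\mathbb{E}_{\theta_{0,n}}\sqrt{p_{\theta_n}/p_{\theta_{0,n}}}=\rho(p_n,q_n)^{|D_{1,n}\cup D_{2,n}|}\le\rho(p_n,q_n)^{B_n}$, giving a total contribution of $\pi_n(S_n)\pi_n(\theta_{0,n})^{-1}\rho(p_n,q_n)^{B_n}$. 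Adding the two terms and using $a+b\le 2(a\vee b)$ produces exactly the claimed bound.

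I expect no deep obstacle: the only place requiring care is the factorisation of the Hellinger affinity, where one must correctly verify that the unordered/ordered pair conventions in the definition of $D_{1,n},D_{2,n}$ match the product index $1\le i<j\le n$ of the likelihood, and that pairs on which the two parameters agree contribute a factor $1$ to the affinity. Everything else is the textbook posterior/test split, here made quantitative using the Bernoulli affinity $\rho(p_n,q_n)$.
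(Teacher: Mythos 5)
Your proof is correct and follows essentially the same route as the paper: a test-based decomposition of the posterior with likelihood-ratio tests for each $\theta_n\in S_n$, whose power is the Hellinger affinity $\rho(p_n,q_n)^{|D_{1,n}(\theta_n)\cup D_{2,n}(\theta_n)|}\le\rho(p_n,q_n)^{B_n}$, combined by a union bound and the inequality $a+b\le 2(a\vee b)$. The only differences are presentational: you unpack inline what the paper delegates to its posterior-decomposition lemma and testing-power lemma, and you compute the affinity by factorising over the edge pairs where $Q_{i,j;n}(\theta_n)\neq Q_{i,j;n}(\theta_{0,n})$ rather than via the binomial moment-generating function, which yields the same exponent.
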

\begin{proof}
Obviously, \(D_{1,n}(\theta_n)\) and \(D_{2,n}(\theta_n)\) are disjoint and do not depend on the chosen representations.
	According to  \cref{lem:posteriorconvergencekleijnlemma}  (with $B_n=\{\theta_{0}\}$),
	for any tests $\phi_{S_n}:\scrX_N\to[0,1]$,
	we have,
	\[
	P_{\theta_{0,n}}\Pi_n(S_n|X^n)
	\le P_{\theta_{0,n}}\phi_{S_n}(X^n)
	+\frac{1}{\pi^{n}_{\theta_{0, n}}} \sum_{\theta_n\in S_n}\pi_n(\theta_n)
	P_{\theta_n}(1-\phi_{S_n}(X^n)).
	\]
	\Cref{lem:testingpower} proves that for any $\theta_n\in S$
	there is a test function $\phi_{\theta_n}$ that distinguishes
	$\theta_{0,n}$ from $\theta_n$ as follows,
	\begin{equation*}
	P_{\theta_{0,n}}\phi_{\theta_n}(X^n)+
	P_{\theta_n}(1-\phi_{\theta_n}(X^n)) \leq
	\rho(p_n,q_n)^{|D_{1,n}(\theta_{ n})\cup D_{2,n}(\theta_n)|}
	\le
	\rho(p_n,q_n)^{B_n},
	\end{equation*}
	where the last inequality follows from the fact that \(\rho(p_n, q_n)\le 1\) and the assumption \(|D_{1,n}(\theta_{ n})\cup D_{2,n}(\theta_n)|\ge  B_n\), for all \(\theta_n\in S_n\). 
	Then using test functions
	$\phi_{S_n}(X^n)=\max\{\phi_{\theta_n}(X^n):\theta_n\in S_n\}$, 
	we have,
	\[
	P_{\theta_{0,n}}\phi_{S_n}(X^n)\le
	\sum_{\theta_n\in S_n}  P_{\theta_{0,n}}\phi_{\theta_n}(X^n),
	\]
	so that,
	\begin{align*}
	P_{\theta_{0,n}}\Pi_n(S_n|X^n) \le & \sum_{\theta_n\in S_n}
	\rh{\frac{\pi_n(\theta_n)}{\pi_{\theta_{0,n}}^{n}}\bigvee 1}\rh{ P_{\theta_{0,n}}\phi_{\theta_n}(X^n) + 
		P_{\theta_n}(1-\phi_{\theta_n}(X^n))}\\
	\le     &  2\rh{\frac{\pi_n(S_n)}{\pi_n(\theta_{0,n})}\bigvee |S_n|} \rho(p_n,q_n)^{B_n},
	\end{align*}
	where we use that for a finite index set \(I\) and a sequence of non-negative numbers \((x_i:i\in I)\),  \(\sum_{i\in  I}(x_i\vee 1)\le 2\rh{|I|\vee \sum_{i\in I}x_i}.\)
\end{proof}

\section{Selection of the number of classes}
\label{sec:SBMselect}
Consider the sequence of experiments in which we observe random
graphs $X^n\in\scrX^n$ generated by the SBM
of \cref{eq:pms}. We first provide a general condition under which the  posterior selects the true model \(\Theta_{n,\ell}\) consistently. Next we apply this to several interesting examples. 
The Hellinger-affinity between two Bernoulli-distributions with
parameters $p$ and $q$, is given by
\[
  \rho(p,q)=p^{1/2}q^{1/2}+(1-p)^{1/2}(1-q)^{1/2}.
\] %
This quantity determines the power of our tests, \cref{lem:testingpower}, similar as in \cite{kleijnwaaij18}.  (See also \cite[equation 1.2 and theorem 1.1]{zhangzhou2016} where minus the log of this quantity (i.e. the R\'enyi divergence of order 1/2) emerges in the minimax rate of convergence.)
 
 \newcommand{\upperboundposteriorThetanl}{2
 	\rh{
 		\frac{\pi_n(\Theta_{n ,\ell})} {\pi_n(\theta_{0,n})}\bigvee |\Theta_{n ,\ell}|} \rho(p_n,q_n)^{\frac12n(m_{n,\ell_0\wedge \ell,\min}-m_{n,\ell_0\vee \ell,\max})
 	}
 }
 
 \newcommand{\inequalityforpostconvThetanl}{
P_{\theta_{0,n}}
\Pi\bigl(\Tht_{n,\ell}\bigm|X^n\bigr)
\le \upperboundposteriorThetanl
}
 
\begin{proposition}\label{prop:postconvThetanl}
\label{prop:sbmselectlm} For fixed \(n\), let the set of models \(M_n\) satisfy  \cref{ass:assumptiononthesizeoflabelsets} and let  \(\Pi_n\) a prior on \(\Theta_n\) satisfying \cref{eq:prior}. 
Suppose that for some $\ell_0\in \sL_n$, we observe a graph
$X^n$ with $n$ vertices, distributed according to 
$P_{\theta_{0,n}}
$
for some class assignment $\theta_{0,n}\in\Theta_{n,\ell_0}$.
For \(\ell\in \sL_n\) unequal to $\ell_0$,
we have,
\begin{equation}
\inequalityforpostconvThetanl.
\end{equation}%
\end{proposition}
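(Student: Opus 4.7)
The plan is to apply \Cref{prop:postconvset} with $S_n = \Theta_{n,\ell}$, which is disjoint from $\{\theta_{0,n}\}$ since $\ell\neq\ell_0$. This reduces the task to producing a lower bound $B_n$ on $|D_{1,n}(\theta_n)\cup D_{2,n}(\theta_n)|$ that is uniform in $\theta_n\in\Theta_{n,\ell}$, of the form $B_n=\tfrac{1}{2}n(m_{n,\ell_0\wedge\ell,\min}-m_{n,\ell_0\vee\ell,\max})$; then the stated bound is exactly the conclusion of \Cref{prop:postconvset} applied to $S_n$.

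To produce such a combinatorial lower bound, I would fix representations $\theta_{0,n}'$ of $\theta_{0,n}$ and $\theta_n'$ of $\theta_n$ and introduce the confusion matrix
\[
N_{a,b}=\#\set{1\le i\le n : \theta_{0,n,i}'=a,\;\theta_{n,i}'=b},
\]
whose row sums $m_{0,a}=\sum_b N_{a,b}$ recover the class sizes of $\theta_{0,n}$ and whose column sums $m_b=\sum_a N_{a,b}$ recover those of $\theta_n$. A direct pair-count gives
\[
|D_{1,n}(\theta_n)|=\sum_a\sum_{b<b'}N_{a,b}N_{a,b'},\qquad |D_{2,n}(\theta_n)|=\sum_b\sum_{a<a'}N_{a,b}N_{a',b}.
\]

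I would then split on whether $\ell<\ell_0$ or $\ell>\ell_0$, in each case invoking \Cref{ass:assumptiononthesizeoflabelsets}. Suppose $\ell<\ell_0$; then $N_{a,b}\le m_{0,a}\le m_{n,\ell_0,\max}$ for every $(a,b)$. Writing $2\sum_{a<a'}N_{a,b}N_{a',b}=m_b^{2}-\sum_a N_{a,b}^{2}$ and using the elementary bound $\sum_a N_{a,b}^{2}\le m_{n,\ell_0,\max}\cdot m_b$ yields
\[
|D_{2,n}(\theta_n)|\ge \tfrac{1}{2}\sum_b m_b\bigl(m_b-m_{n,\ell_0,\max}\bigr).
\]
The bounds $m_b\ge m_{n,\ell,\min}\ge m_{n,\ell_0,\max}$ (the latter by \Cref{ass:assumptiononthesizeoflabelsets}) and $\sum_b m_b=n$ then deliver $|D_{2,n}(\theta_n)|\ge\tfrac12 n(m_{n,\ell,\min}-m_{n,\ell_0,\max})$, which under $\ell_0\wedge\ell=\ell$ and $\ell_0\vee\ell=\ell_0$ is the asserted form of $B_n$. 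The case $\ell>\ell_0$ is symmetric: one works instead with $|D_{1,n}(\theta_n)|$ and the row sums to obtain $\tfrac12 n(m_{n,\ell_0,\min}-m_{n,\ell,\max})$, which again matches.

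The main obstacle is largely bookkeeping, namely keeping track of which of $\ell,\ell_0$ plays the role of the ``fine'' partition that supplies the capacity bound on $N_{a,b}$, and which plays the ``coarse'' role that supplies the minimum cell size, so that the final exponent takes the uniform form $\tfrac12 n(m_{n,\ell_0\wedge\ell,\min}-m_{n,\ell_0\vee\ell,\max})$. The combinatorial content collapses to the elementary inequality $\sum_a x_a^{2}\le (\max_a x_a)\sum_a x_a$ applied to the columns (resp.\ rows) of the confusion matrix, and \Cref{ass:assumptiononthesizeoflabelsets} ensures the resulting exponent is non-negative so that $\rho(p_n,q_n)^{B_n}\le 1$ and \Cref{prop:postconvset} delivers the stated inequality.
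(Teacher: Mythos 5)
Your proof is correct, and its top-level structure coincides with the paper's: apply \cref{prop:postconvset} with \(S_n=\Theta_{n,\ell}\) and feed it the uniform exponent \(B_n=\tfrac12 n(m_{n,\ell_0\wedge\ell,\min}-m_{n,\ell_0\vee\ell,\max})\). Where you diverge is in how that lower bound on \(|D_{1,n}(\theta_n)\cup D_{2,n}(\theta_n)|\) is obtained. The paper's \cref{lem:cardinalityofD1ncupD2n} argues globally: it compares the total number of within-class pairs under \(\theta_{0,n}\) (at least \(\tfrac12 n(m_{n,\ell_0,\min}-1)\)) with that under \(\theta_n\) (at most \(\tfrac12 n(m_{n,\ell,\max}-1)\)), and notes that the excess must land in \(D_{1,n}\) (or, in the other ordering of \(\ell,\ell_0\), in \(D_{2,n}\)), with \cref{ass:assumptiononthesizeoflabelsets} deciding which case occurs. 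You instead work per class via the confusion matrix \(N_{a,b}\), using \(2\sum_{a<a'}N_{a,b}N_{a',b}=m_b^2-\sum_a N_{a,b}^2\) and the capacity bound \(N_{a,b}\le m_{n,\ell_0\vee\ell,\max}\) on the finer partition; this is a slightly longer computation but yields the same bound (indeed with per-class information), and the exact formulas for \(|D_{1,n}|,|D_{2,n}|\) you write down are correct. Two minor remarks: the disjointness of \(D_{1,n}\) and \(D_{2,n}\) means bounding either one suffices, as you implicitly use; and, as in the paper, the degenerate case \(m_{n,\ell_0\wedge\ell,\min}=m_{n,\ell_0\vee\ell,\max}\) (where \cref{prop:postconvset} formally requires \(B_n>0\)) is harmless because the asserted bound is then at least \(2\) and thus trivially valid.
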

\begin{proof}
	This follows from \cref{prop:postconvset,lem:cardinalityofD1ncupD2n}.
\end{proof}
\newcommand{\upperboundposteriorThetanlzerocomplement}{\sum_{{\tiny \begin{array}{c}\ell\in\sL_n,\\ \ell\neq \ell_0\end{array}}}  \upperboundposteriorThetanl}

The number of elements in \(\Theta_{n,\ell}\) is bounded by the number of partitions of \(\set{1,\ldots, n}\) into \(\ell\) sets, which is known as the Stirling number \(V(n,\ell )\) of second kind, which  satisfies 
\begin{equation}\label{eq:upperboundcardinialityThetanl}
V(n,\ell)\le \frac12 \binom n \ell \ell^{n-\ell}\le \frac12 e^\ell n^\ell \ell^{n-2\ell}.
\end{equation}

\subsection{Examples: convergence to the true model} \label{sec:examplesconvergencetothetruemodel}
We consider several examples of posterior contraction in one model \(\Theta_{n,\ell_0}\), whose priors and models are defined below.

\begin{example}[Priors]\label{usepageref:priorexample}
	The first prior is defined hierarchically by taking the uniform prior on \(\sL_n\), and conditionally on \(\ell\), we choose the uniform prior on \(\Theta_{n,\ell}\). So
	\begin{equation}\label{eq:hierarchicalprior}
	\begin{split}
	\ell \sim & \frac 1 {|\sL_n|},\\
	\theta_{0,n}\mid \ell \sim & \frac1{|\Theta_{n ,\ell}|}.
	\end{split}
	\end{equation}
	
	A second option for a prior is the uniform prior on \(\Theta_{n}\), so
	\begin{equation}\label{eq:uniformprior}
	\theta_{ 0,n} \sim \frac1{|\Theta_n|}.
	\end{equation}
	In both cases, applying \cref{prop:postconvThetanl} gives, for \(\ell\in \sL_n, \ell\neq \ell_0\), the upper bound, \begin{equation}\label{eq:upperboundforThetamnlinexamples}
P_{\theta_{0,n}}\Pi_n(\Theta_{n,\ell}|X^n)\le 2\rh{\max_{\ell\in \sL_n} |\Theta_{n ,\ell}|} \rho(p_n,q_n)^{\frac12n(m_{n,\ell_0\wedge \ell,\min}-m_{n,\ell_0\vee \ell,\max})}.
\end{equation} 
	\end{example}

\begin{example}[Models]\label{ex:model}
	We take \(\sL_n=\set{1,\ldots, L_n},\) where \(2\le L_n\le n\) depends on the degree of sparsity and is specified in the examples. We let \(\Theta_{n,\ell}\) be all parameters with \(\ell\) classes and satisfying \begin{equation}\label{eq:modeldescription}
	m_{n,\ell,\min }\ge n/\ell - \frac14 \frac n{L_n^2 } \en  m_{n,\ell,\max }\le n/\ell + \frac14 \frac n{L_n^2 }. 
	\end{equation}
	One verifies easily that  \cref{ass:assumptiononthesizeoflabelsets} is satisfied and that for \(\ell,\ell_0\in \sL_n, \ell\neq \ell_0\),
	\begin{equation}\label{eq:testingpower}
	m_{n,\ell_0\wedge \ell,\min }- m_{n,\ell_0\vee \ell,\max}\ge \frac12 \frac n {L_n^2}.
	\end{equation}
	Using \cref{eq:upperboundcardinialityThetanl}, the number of elements of \(\Theta_{n,\ell}\) is upper bounded by \(\frac12e^{2n\log L_n },\) which is trivial for \(\ell=1\), and when \(\ell\ge 2\) and \(n\ge 4\), we have, %
	\begin{equation}\label{eq:upperboundcardinalityThetanl}
		\frac12 e^\ell n^\ell \ell^{n-2\ell}\le \frac12  n^\ell \ell^{n}\le \frac12  \ell^{2n}\le \frac12e^{2n\log L_n },  
	\end{equation}
	using that for integers \(2\le x\le y\) with either \(x\ge 3\) or \(y\ge 4\), \(y^x\le x^y\) and using that \(\ell\le L_n\le n\).
	It follows that, \begin{equation}\label{eq:posteriorconvergenceinexamples}
	P_{\theta_{0,n}}
	\Pi_n\bigl(\Tht_{n,\ell}\bigm|X^n\bigr)
	\le e^{2n\log L_n}\rho(p_n,q_n)^{\frac14n^2 / (L_n^2) }
	\end{equation}
\end{example}

\begin{example}\label{ex:selectdense}{\it (Dense phase)}
Define \(b_n=-\log \rho(p_n,q_n)>0\). 
Let \(L_n=\bm o(\sqrt{n/\log n})\), for example \(L_n=\floor{n^{\alpha}},\) for some \(\alpha<1/2\). 
When \(\frac{nb_n}{L_n^2\log L_n}\ge 12\), 
 \(
P_{\theta_{0,n}} \Pi(\Theta_{n,\ell}\mid X^n) 
\le   e^{-b_nn^2/(12L_n^2)}.
\)
It follows that 
\(
P_{\theta_{0,n}} \Pi(\Theta_{n,\ell_0}^c\mid X^n)\le L_ne^{-b_nn^2/(12L_n^2)}\le  L_n^{1-n}\to 0,\) as \(n\to\infty. 
\)
\closebox\end{example}

In the sparse regimes another approximation strategy is needed. 

\begin{example} {\it (Chernoff-Hellinger phase)}
\label{ex:selectCH}
Recall that in the Chernoff-Hellinger phase, we assume that the edge probabilities satisfy $np_n=a_n\log(n)$ and
$nq_n=b_n\log(n)$ for sequences $a_n,b_n$ bounded away from \(zero\). Suppose \(L_n\) satisfies \begin{equation}\label{eq:conditiononLninChernoffHellingerPhase}
48L_n^2\log L_n \le (\sqrt{a_n}-\sqrt{b_n})^2\log n. 
\end{equation}
\Cref{eq:conditiononLninChernoffHellingerPhase} is satisfied for sufficiently large \(n\), when \(L_n\le (\log n)^a\) with \(a<1/2\). 

Using \cref{lem:sqrtoneminusxislessthanorequaltooneminusxovertwo}, we find  \(\sqrt{1-p}\sqrt{1-q} \le 1-\frac12 (p+q)+ \frac14pq\), and using the fact that \((1+x/r)^r\le e^x\) (see \cref{lem:oneplusxdivrtothepowerrissmallerthanetothepowerx}), for all positive integers \(r\) and \(x>-r\), we obtain 
\begin{align*}
	&\rho(p_n,q_n)^{\frac12n | m_{n,\ell_0\wedge \ell, \min} - m_{n,\ell_0\vee \ell, \max}|}\\
	\le & \vh{ 1 - \frac12\rh{\sqrt{p_n}-\sqrt{q_n}}^2+ p_nq_n/4}^{\frac12n | m_{n,\ell_0\wedge \ell, \min} - m_{n,\ell_0\vee \ell, \max}|}\\
	= & \vh{ 1 - \frac{\log n}{n}\set{\frac12\rh{\sqrt{a_n}-\sqrt{b_n}}^2- \frac{a_nb_n\log  n}{4n}} }^{\frac12n ( m_{n,\ell_0\wedge \ell, \min} - m_{n,\ell_0\vee \ell, \max})}\\
	\le & \expa{-\frac12( m_{n,\ell_0\wedge \ell, \min} - m_{n,\ell_0\vee \ell, \max})
 	\set{\frac12\rh{\sqrt{a_n}-\sqrt{b_n}}^2- \frac{a_nb_n \log n }{4n}} \log n
}\\
\le & \expa{-\frac1{16}\rh{\sqrt{a_n}-\sqrt{b_n}}^2\frac{n \log n}{L_n^2}},
\end{align*}
for \(n\) sufficiently large. 
It follows from \cref{eq:posteriorconvergenceinexamples} that  
\begin{align*}
&P_{\theta_{0,n}}\Pi(\Theta_{n,\ell}|X^n)
\le  \expa{-n\rh{\frac1{16}\rh{\sqrt{a_n}-\sqrt{b_n}}^2\frac{ \log n}{L_n^2} - 2\log L_n}}\\
\le & \expa{-\rh{\sqrt{a_n}-\sqrt{b_n}}^2\frac{ n\log n}{48L_n^2}
	},
\end{align*}
using \cref{eq:conditiononLninChernoffHellingerPhase} in the last inequality. 
It follows that 
\[
P_{\theta_{0,n}} \Pi(\Theta_{\bm m_{n,\ell_0}}^c\mid X^n)\le  L_n \expa{-\rh{\sqrt{a_n}-\sqrt{b_n}}^2\frac{ n\log n}{48L_n^2}}\le L_n^{1-n}\to 0,  
\] 
 as \(n\to\infty.\)   
\closebox\end{example}

Even in the Kesten-Stigum phase, there is testing power enough to  decide between the classes.

\begin{example} {\it (Kesten-Stigum phase)}\label{ex:kestenstigumphase}
Recall that in the Kesten-Stigum phase, $np_n=c_n$ and
$nq_n=d_n$ for sequences of positive numbers $c_n\) and \(d_n$ bounded away from zero.
We obtain in similar manner as in \cref{ex:selectCH} that, \[(c_n^{1/2}-d_n^{1/2})^2
\ge 48L_n^2\log L_n,\]
 guarantees that, 
\[
P_{\theta_{0,n}} \Pi(\Theta_{n,\ell_0}^c\mid X^n)\le L_n \expa{-\rh{\sqrt{c_n}-\sqrt{d_n}}^2
\frac n{48L_n^2}}\le L_n^{1-n}\to 0,
\]    
as \(n\to\infty\).
\closebox\end{example}

\section{Posterior concentration at the parameter}\label{sec:posteriorconsistencyataparameter}

Posterior convergence at the true parameter \(\theta_{0,n}\in\Theta_{n,\ell_0}\), where \(\ell_0\) is the true number of classes, is derived in the following way. Noting that
\[
P_{\theta_0}\Pi(\set{\theta_{n,0}}^c\mid X^n)= P_{\theta_0} \Pi(\Theta_{n,\ell_0}^c\mid X^n) + P_{\theta_{n,0}}\Pi(\Theta_{n,\ell_0}\backslash\set{\theta_{0,n}}\mid X^n),
\]
posterior consistency is established once both expectations on the right converge to zero as \(n\to\infty\). The first was the content of the previous section, the second we treat here. Obviously, when \(\ell_0=1\), then \(\Theta_{n,\ell_0}=\set{\theta_{n,0}}\) and convergence at the true parameter follows  from \cref{sec:SBMselect}. Hence, we  assume that \(\ell_0>1.\)
In the previous section posterior convergence in a set \(\Theta_{n,\ell_0}\) was established by showing that there was enough testing power between \(\Theta_{n,\ell_0}\) and \(\Theta_{n,\ell}\), for \(\ell\neq \ell_0\). Here we do something similar within \(\Theta_{n,\ell_0}\), by constructing tests between \(\theta_{ 0,n}\) (in the dense and sparse case) and rings \(V_{n,k}\subseteq \Theta_{n ,\ell_0}\) of radius \(k\) around \(\theta_{ 0,n}\) in the distance \(r_n\).

Define the distance \(r_{n}\) on \(\Theta_{n}\) as follows: first define \(r_{n}'\) on \(\Theta_{n}'\) by \[
r_{n}'(\theta',\eta')=\max_{a\neq b}\#\set{i:\theta_i'=a,\eta_i'=b}.
\]
This does depend on the particular choice of the representation. So we define for \(\theta,\eta\in\Theta_n\),
\begin{equation}\label{eq:definitionr}
r_{n}(\theta,\eta)=\min_{\theta'\in\theta,\eta'\in\eta}r_{n}'(\theta',\eta'). 
\end{equation}

The function \(r_{n}\)  takes values in \(\set{0,\ldots,\floor{n/2}}\) and according to \cref{lem:boundonm}, when \(\theta,\eta\in \Theta_{n ,\ell}\),  \(r_n(\theta,\eta)\le  m_{n,\ell,\max}/2\). The function \(r_n\) is symmetric and \(r_n(\theta,\eta)=0\) if and only if \(\theta=\eta\).

For a given \(\ell_0\in\sL_n\) and \(\theta_{0,n}\in\Theta_{n,\ell_0}\) define subsets of \(\Theta_{n,\ell_0} \), by  \[
V_{n,\ell_0,k}=\set{\theta_n\in\Theta_{n,\ell_0}: r_{n}(\theta_n,\theta_{0,n})=k}.
\]
These sets resemble the sets \(V_{n,k}\) in \cite{kleijnwaaij18} and are rings of radius \(k\) around \(\theta_{ 0,n}\) in \(\Theta_{n ,\ell_0}\). When \(k\le m_{n,\ell_0,\min},\) \cref{lem:upperboundVnlk} bounds the number of elements in \(V_{n,\ell_0,k}\) by
\(2^{k\ell_0(\ell_0-1)}\binom{n(\ell_0-1)}{\ell_0(\ell_0-1)k}\), which in turn is bounded by \(
\rh{\frac{2en}{\ell_0k}}^{\ell_0(\ell_0-1)k}.\)

\newcommand{\upperboundposteriorVnlk}{2  \rh{|V_{n,\ell_0,k}| \vee   \frac{\pi_n(V_{n,\ell_0,k})}{\pi_n(\theta_{0, n})}} \rho(p_n,q_n)^{2k(m_{n,\ell_0,\min}-k)^+}}
\begin{proposition}
\label{prop:convergenceatapoint} Consider a prior on \(\Theta_n\) satisfying \cref{eq:prior}. 
Suppose that for some $\ell_0\in \sL_n$, \(\ell_0>1\), we observe a graph
$X^n$ with $n$ vertices, distributed according to 
$P_{\theta_{0,n}}$, where  $\theta_{0,n}\in\Theta_{n,\ell_0}$. For all \(k\in\set{0,\ldots, \floor{m_{n,\ell_0,\max}/2}}\), we have,
\begin{equation}
  P_{\theta_{0,n}}
    \Pi\bigl(V_{n,\ell_0,k}\bigm|X^n\bigr)
    \le \upperboundposteriorVnlk.
\end{equation}
\end{proposition}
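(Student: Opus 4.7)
The plan is to reduce the claim to \cref{prop:postconvset} applied with $S_n = V_{n,\ell_0,k}$. Since $V_{n,\ell_0,0}=\set{\theta_{0,n}}$, the case $k=0$ gives the trivial bound $P_{\theta_{0,n}}\Pi(V_{n,\ell_0,0}\mid X^n)\le 1\le 2$, so assume $k\ge 1$, in which case $V_{n,\ell_0,k}\subseteq\Theta_n\weg\set{\theta_{0,n}}$ and \cref{prop:postconvset} applies. What remains is to establish the uniform lower bound
\[
  2k(m_{n,\ell_0,\min}-k)^{+}
  \,\le\, \inf_{\theta_n\in V_{n,\ell_0,k}}\abs{D_{1,n}(\theta_n)\cup D_{2,n}(\theta_n)};
\]
the conclusion then follows by choosing $B_n=2k(m_{n,\ell_0,\min}-k)^{+}$ in \cref{prop:postconvset}. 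When $k>m_{n,\ell_0,\min}$ the left-hand side is $0$ and there is nothing to show, so assume $k\le m_{n,\ell_0,\min}$.

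Fix $\theta_n\in V_{n,\ell_0,k}$ and select representations $\theta_{0,n}',\theta_n'$ that realise the minimum in the definition \eqref{eq:definitionr} of $r_n$, so that $r_n'(\theta_{0,n}',\theta_n')=k$. By the definition of $r_n'$, there exist distinct labels $a,b\in\set{1,\ldots,\ell_0}$ with $T:=\set{i:\theta_{0,n,i}'=a,\ \theta_{n,i}'=b}$ of cardinality exactly $k$. Write $S_a=\set{i:\theta_{0,n,i}'=a}$ and $U_b=\set{i:\theta_{n,i}'=b}$; since $\theta_{0,n}$ and $\theta_n$ both lie in $\Theta_{n,\ell_0}$, both $|S_a|$ and $|U_b|$ are at least $m_{n,\ell_0,\min}$, while $T\subseteq S_a\cap U_b$ by construction.

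Any unordered pair $\set{i,j}$ with $i\in T$ and $j\in S_a\weg T$ satisfies $\theta_{0,n,i}'=\theta_{0,n,j}'=a$ but $\theta_{n,i}'=b\neq\theta_{n,j}'$, so it contributes to $D_{1,n}(\theta_n)$; this yields $\abs{D_{1,n}(\theta_n)}\ge k(|S_a|-k)\ge k(m_{n,\ell_0,\min}-k)$. Symmetrically, any $\set{i,j}$ with $i\in T$ and $j\in U_b\weg T$ has $\theta_{n,i}'=\theta_{n,j}'=b$ but $\theta_{0,n,i}'=a\neq\theta_{0,n,j}'$, contributing to $D_{2,n}(\theta_n)$ and giving $\abs{D_{2,n}(\theta_n)}\ge k(m_{n,\ell_0,\min}-k)$. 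Since $D_{1,n}$ and $D_{2,n}$ are disjoint by definition, summing the two contributions gives the required bound. The main obstacle is the combinatorial step of isolating these disjoint witnesses; once minimising representations are chosen, the sets $S_a\weg T$ and $U_b\weg T$ furnish them with no further work, and the remainder is a direct appeal to \cref{prop:postconvset}.
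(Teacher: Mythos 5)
Your proposal is correct and follows essentially the same route as the paper: the paper also deduces the proposition from \cref{prop:postconvset} together with the lower bound \(|D_{1,n}\cup D_{2,n}|\ge 2k(m_{n,\ell_0,\min}-k)^+\), which is exactly \cref{lem:lowerboundondintermsofr}, proved there by the same witness-pair counting you give (the paper obtains the \(D_{2,n}\) contribution by a symmetry/reversal argument rather than pairing within \(U_b\), a cosmetic difference). Your explicit handling of the trivial cases \(k=0\) and \(k\ge m_{n,\ell_0,\min}\), where the stated bound is at least \(2\) and \cref{prop:postconvset} need not be invoked, is a welcome small addition.
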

\begin{proof}
	This follows from \cref{prop:postconvset} and the lower bound on \(|D_{1,n}(\theta_n)\cup D_{2,n}(\theta_n)|\) in \cref{lem:lowerboundondintermsofr}.
\end{proof}

Define  \[
W_{n,\ell_0,k_n}= \bigcup_{k=k_n}^{\floor{m_{n,\ell\wedge \ell_0,\max}/2}} V_{n,  \ell_0,k},
\]
to be the set of all elements with \(r_n\) distance at  least \(k_n\) from \(\theta_{ 0,n}\). Taking \(k_n=1\), \(W_{n,\ell_0, 1}=\Theta_{n,\ell_0}\backslash\set{\theta_{0,n}}\).

\newcommand{\upperboundposteriorWnlkn}{ \sum_{k=k_n}^{\floor{m_{n, \ell_0,\max}/2}}
	\upperboundposteriorVnlk
}
Applying \cref{prop:convergenceatapoint} to \(W_{n,\ell_0,k_n}\) gives 
\begin{align*}
&P_{\theta_{0,n}}\Pi(W_{n,k_n}|X^n)
\le    \upperboundposteriorWnlkn
\end{align*}

For the next corollary we need the following assumption \begin{assumption}\label{ass:conditiononThetanl}
	For all \(\ell\in \sL_n\), \(m_{n,\ell,\min}\ge m_{n,\ell,\max}/2\). 
\end{assumption}

\begin{corollary}\label{col:convenientcorollaryforposteriorconvergence} Assume \cref{ass:conditiononThetanl}. 
	Suppose that for some $\ell_0\in\sL_n$, \(\ell_0>1\),  and $\theta_{0,n}\in\Theta_{n,\ell_0}$, we observe a graph
	$X^n$ with $n$ vertices, distributed according to 
	$P_{\theta_{0,n}}.$ Futhermore, assume that the prior \(\pi_n\) gives positive mass to every element of \(\Theta_{n,\ell_0}\) and satisfies  \[
	\max_{\theta_n,\eta_n\in\Theta_{n ,\ell_0}}\frac{\pi_n(\theta_n)}{\pi_n(\eta_n)}\le K_{n,\ell_0}.
	\] 
	Define
	\[B_n=2n\rho(p_n,q_n)^{\frac{2m_{n,\ell_0,\min}-m_{n,\ell_0,\max}}{\ell_0(\ell_0-1)}},\] then \begin{equation}\label{eq:upperboundposteriorThetanlzerowithupperboundonpriormassratios}
	P_{\theta_{n,0}}\Pi_n(\Theta_{n,\ell_0}\backslash\set{\theta_{n,0}}\mid X^n)\le 2
	K_{n,\ell_0}B_n^{\ell_0(\ell_0-1)}e^{(\ell_0-1)B_n}.
	\end{equation}
\end{corollary}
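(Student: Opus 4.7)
The plan is to decompose $\Theta_{n,\ell_0}\setminus\{\theta_{0,n}\}$ as the disjoint union of the rings $V_{n,\ell_0,k}$ and apply \cref{prop:convergenceatapoint} term-by-term. By \cref{lem:boundonm}, the possible distances $r_n(\theta_n,\theta_{0,n})$ for $\theta_n\in\Theta_{n,\ell_0}$ lie in $\{0,\ldots,\lfloor m_{n,\ell_0,\max}/2\rfloor\}$, so the above union runs over $k=1,\ldots,\lfloor m_{n,\ell_0,\max}/2\rfloor$. For each such $k$, the prior-ratio hypothesis yields $\pi_n(V_{n,\ell_0,k})/\pi_n(\theta_{0,n})\le K_{n,\ell_0}|V_{n,\ell_0,k}|$ (and we may assume $K_{n,\ell_0}\ge1$), so \cref{prop:convergenceatapoint} bounds each posterior mass by $2K_{n,\ell_0}|V_{n,\ell_0,k}|\,\rho(p_n,q_n)^{2k(m_{n,\ell_0,\min}-k)^+}$.

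Next I would combine two simplifications. First, \cref{ass:conditiononThetanl} forces $k\le m_{n,\ell_0,\min}$ throughout the range of summation, which drops the positive part and, since $2k\le m_{n,\ell_0,\max}$, yields $2k(m_{n,\ell_0,\min}-k)\ge k(2m_{n,\ell_0,\min}-m_{n,\ell_0,\max})$. Writing $\alpha:=\rho(p_n,q_n)^{2m_{n,\ell_0,\min}-m_{n,\ell_0,\max}}$, the exponent factor is thus at most $\alpha^k$. Second, from the cardinality bound stated before \cref{prop:convergenceatapoint} together with $\binom{a}{b}\le a^b/b!$, we obtain
\[
|V_{n,\ell_0,k}|\le \frac{(2n(\ell_0-1))^{\ell_0(\ell_0-1)k}}{(\ell_0(\ell_0-1)k)!}.
\]
Recognising that $B_n^{\ell_0(\ell_0-1)}=(2n)^{\ell_0(\ell_0-1)}\alpha$, a single term rewrites as $2K_{n,\ell_0}((\ell_0-1)B_n)^{\ell_0(\ell_0-1)k}/(\ell_0(\ell_0-1)k)!$, and summing over $k\ge1$ reduces the task to bounding a ``sparse'' exponential series.

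The key inequality I will invoke is that for $x>0$ and any integer $m\ge1$,
\[
\sum_{k=1}^{\infty}\frac{x^{mk}}{(mk)!}\le \frac{x^m}{m!}\,e^x,
\]
which follows by expanding the right-hand side as $\sum_{i\ge m}\binom{i}{m}x^i/i!$ and noting $\binom{i}{m}\ge1$ on the sub-series $i=mk$. Applied with $x=(\ell_0-1)B_n$ and $m=\ell_0(\ell_0-1)$ this produces the desired prefactor $((\ell_0-1)B_n)^{\ell_0(\ell_0-1)}/(\ell_0(\ell_0-1))!$ times $e^{(\ell_0-1)B_n}$. The final cosmetic step is to verify $(\ell_0-1)^{\ell_0(\ell_0-1)}\le(\ell_0(\ell_0-1))!$: this is trivial for $\ell_0=2$ and for $\ell_0\ge3$ follows from Stirling's bound $n!\ge(n/e)^n$ since $\ell_0/e\ge1$, so that $(\ell_0(\ell_0-1))!\ge((\ell_0-1)\ell_0/e)^{\ell_0(\ell_0-1)}\ge(\ell_0-1)^{\ell_0(\ell_0-1)}$.

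I expect the main obstacle to be the algebraic matching in the third paragraph --- pinning down the right packaging of the two free integer parameters ($k$ from the ring decomposition and $\ell_0(\ell_0-1)$ from the cardinality bound) so that the resulting series is indeed of the form $\sum_k x^{mk}/(mk)!$ with $x=(\ell_0-1)B_n$. Everything else is bookkeeping: the use of \cref{ass:conditiononThetanl} is automatic once one notices $2k\le m_{n,\ell_0,\max}$, and \cref{prop:convergenceatapoint} already contains the ring-level estimate, so no fresh testing arguments are needed.
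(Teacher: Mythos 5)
Your proposal is correct and follows essentially the same route as the paper: ring decomposition via \cref{prop:convergenceatapoint}, the cardinality bound \cref{eq:upperboundfornumberofelementsinVnkintermsofabinomialexponent} with $\binom{a}{b}\le a^b/b!$, the use of \cref{ass:conditiononThetanl} to get the exponent $k(2m_{n,\ell_0,\min}-m_{n,\ell_0,\max})$, the bound of the sparse series by $\frac{x^m}{m!}e^x$ with $x=(\ell_0-1)B_n$, $m=\ell_0(\ell_0-1)$, and the final absorption of $(\ell_0-1)^{\ell_0(\ell_0-1)}$ into $(\ell_0(\ell_0-1))!$. The only cosmetic difference is that you bound the lacunary series directly via the binomial expansion of $\frac{x^m}{m!}e^x$, whereas the paper first passes to the full tail $\sum_{j\ge m}x^j/j!$; these are the same estimate.
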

\begin{proof}
		Take \(k_n\equiv1\) for  all \(n\) in \cref{prop:convergenceatapoint}. 
	Using \cref{prop:convergenceatapoint}, the upper bound on the number of elements in \(V_{n,\ell,k}\) \cref{eq:upperboundfornumberofelementsinVnkintermsofabinomialexponent}  and  the condition on the prior mass function \(\pi_{n}\) we have the bound \begin{align*}
	&P_{\theta_{n,0}}\Pi(\Theta_{n,\ell_0}\backslash\set{\theta_{n,0}}\mid X^n) \\
	\le &  2K_{n,\ell_0} \sum_{k=1}^{\floor{m_{n,\ell_0,\max}/2}} 2^{k\ell_0(\ell_0-1)}\binom{n(\ell_0-1)}{\ell_0(\ell_0-1)k}\rho(p_n,q_n)^{2k(m_{n,\ell_0,\min}-k)}\\
	\le & 2K_{n,\ell_0} \sum_{k=1}^{\floor{m_{n,\ell_0,\max}/2}} \frac{(2n(\ell_0-1))^{\ell_0(\ell_0-1)k}}{(\ell_0(\ell_0-1)k)!}\rho(p_n,q_n)^{2k(m_{n,\ell_0,\min}-m_{n,\ell_0,\max}/2)}\\
	\le &2K_{n,\ell_0} \sum_{k=\ell_0(\ell_0-1)}^{\infty} \frac{(2n(\ell_0-1))^{k}}{k!}\rho(p_n,q_n)^{k\frac{2m_{n,\ell_0,\min}-m_{n,\ell_0,\max}}{\ell_0(\ell_0-1)}}\\
	\le &2K_{n,\ell_0} \frac{(2n(\ell_0-1))^{\ell_0(\ell_0-1)}}{(\ell_0(\ell_0-1))!}\rho(p_n,q_n)^{2m_{n,\ell_0,\min}-m_{n,\ell_0,\max}} \\
	&	\times 
	\sum_{k=0}^{\infty} \frac{(2n(\ell_0-1))^{k}}{k!}\rho(p_n,q_n)^{k\frac{2m_{n,\ell_0,\min}-m_{n,\ell_0,\max}}{\ell_0(\ell_0-1)}}\\
	= & 2K_{n,\ell_0} \frac1{(\ell_0(\ell_0-1))!} \rh{(2n(\ell_0-1))\rho(p_n,q_n)^{\frac{2m_{n,\ell_0,\min}-m_{n,\ell_0,\max}}{\ell_0(\ell_0-1)}}}^{\ell_0(\ell_0-1)}\\
	&\times\exp\rh{2n(\ell_0-1)\rho(p_n,q_n)^{ \frac{2m_{n,\ell_0,\min}-m_{n,\ell_0,\max}}{\ell_0(\ell_0-1)}}}\\
	= & \frac{2K_{n,\ell_0}}{(\ell_0(\ell_0-1))!}\big((\ell_0-1)B_n\big)^{\ell_0(\ell_0-1)}e^{(\ell_0-1)B_n}.
	\end{align*}
	With the lower bound \(k!\ge \sqrt{2\pi}k^{k+1/2} e^{-k} \) one can easily see that for \(\ell_0\ge 3\), \(\frac{(\ell_0-1)^{\ell_0(\ell_0-1)}}{(\ell_0(\ell_0-1))!}\le 1\), but it also holds for \(\ell_0=2\). Hence 
	\(P_{\theta_{n,0}}\Pi(\Theta_{n,\ell_0}\backslash\set{\theta_{n,0}}\mid X^n)\le2 K_{n,\ell_0}B_n^{\ell_0(\ell_0-1)}e^{(\ell_0-1)B_n}\).
\end{proof}

\subsection{Examples: recovery of the parameters}\label{sec:examplerecoveryofparameters}

In this section we are interested in exact and almost exact recovery for some interesting examples. We consider the same priors and models as in  \cref{usepageref:priorexample,ex:model}. Note that in this case \begin{equation}
m_{n,\ell, \min}-m_{n,\ell,\max}/2 \ge \frac n {8\ell}, 
\end{equation}
so \cref{ass:conditiononThetanl} is satisfied for all \(\ell\in\sL_n\). 

\newcommand{\postconvupperbounddensephase}{P_{\theta_{0,n}}\Pi_n(\theta_0\mid X^n)\ge  1-L_ne^{-b_nn^2/(12L_n^2)}- 2\sqrt ee^{-nb_n/(8L_n)}}
\begin{example}\label{ex:denseconvergenceatapoint}{\it (Dense phase, continuation of \cref{ex:selectdense})}

Recall that \(b_n = -\log \rho(p_n,q_n) \). The quantity 	
 \(B_n\) in \cref{col:convenientcorollaryforposteriorconvergence} is bounded as follows
 \begin{align*}
 B_n= 2n\rho(p_n,q_n)^{\frac{2m_{n,\ell_0,\min}-m_{n,\ell_0,\max}}{\ell_0(\ell_0-1)}}
 \le  2n\rho(p_n,q_n)^{\frac {n}{4\ell_0^2(\ell_0-1)}}
=  e^{-\frac{ n b_n}{ 4\ell_0^2(\ell_0-1) } + \log (2n)}.
 \end{align*}

 So with \cref{col:convenientcorollaryforposteriorconvergence}   and the result of \cref{ex:selectdense}, when \(L_n=\bm o(\sqrt{n/\log n})\) and  \[
nb_n\ge 8L_n^2(L_n-1)\log (2n),
\]
we obtain\[
\postconvupperbounddensephase. 
\]
In particular 
\(
P_{\theta_{0,n}}\Pi_n(\theta_0\mid X^n)\to 1, 
\)
as \(n\to\infty \).
\closebox\end{example}

So in the dense phase we have exponentially fast convergence of the expected posterior mass of \(\theta_{ 0,n}\) to one, under the true distribution. 
In the sparse phases we get only a polynomial rate of convergence.

\begin{example}\label{ex:CHconcentrationatapoint} {\it (Chernoff-Hellinger phase, continuation of \cref{ex:selectCH})}
With a similar calculation as in  \cref{ex:selectCH}, we find,
\begin{align*}
&\rho(p_n,q_n)^{\frac{2 m_{n,\ell_0, \min} - m_{n,\ell_0, \max}}{\ell_0(\ell_0-1)}}\\
= & \vh{ 1 - \frac{\log n}{n}\rh{\frac12\rh{\sqrt{a_n}-\sqrt{b_n}}^2- \frac{a_nb_n\log  n}{4n}} }^{n\frac{2 m_{n,\ell_0, \min} - m_{n,\ell_0, \max}}{\ell_0(\ell_0-1)n}}\\
\le & \expa{-\frac{2 m_{n,\ell_0, \min} - m_{n,\ell_0, \max}}{\ell_0(\ell_0-1)n}
	\rh{\frac12\rh{\sqrt{a_n}-\sqrt{b_n}}^2- \frac{a_nb_n \log n }{4n}} \log n
}.
\end{align*}

It follows that  \[
B_n\le e^{\log (2n)  -\frac{(a_n^{1/2}-b_n^{1/2})^2}{16\ell_0^2(\ell_0-1)}\log n}.\]

So when \((a_n^{1/2}-b_n^{1/2})^2\ge  32L_n^2(L_n-1)\frac{\log(2n)}{\log n}\), 
\begin{align*}
& P_{\theta_{n,0}}\Pi_n(\Theta_{n,\ell_0}\backslash\set{\theta_{n,0}}\mid X^n)
\le  2\sqrt e n^{-(a_n^{1/2}-b_n^{1/2})^2 / (32L_n)}.
\end{align*}

It follows that when \((a_n^{1/2}-b_n^{1/2})^2\ge  48L_n^2(L_n-1)\), then  with the result of  \cref{ex:selectCH}   \begin{align*}
	P_{\theta_{0,n}}\Pi_n(\theta_0\mid X^n)\ge 1- L_n \expa{-n{\frac1{48}\rh{\sqrt{a_n}-\sqrt{b_n}}^2\frac{ \log n}{L_n^2}}}  - 2\sqrt e n^{-(a_n^{1/2}-b_n^{1/2})^2 / (32L_n)}.
\end{align*}
In particular  \(
     P_{\theta_{0,n}}\Pi_n(\theta_{0,n}\mid X^n)\to 1,\) as \(n\to\infty. 
     \)
\closebox\end{example}

In the even sparser Kesten-Stigum phase exact recovery is not possible anymore (\cite{abbesandon2015,abbesandon2018}). Instead we obtain weak recovery, in which all but a small fraction of the labels is recovered. 

\begin{example}\label{ex:kestenstigumphaseconvergenceinaneighbourhood} {\it (Kesten-Stigum phase, continuation of \cref{ex:kestenstigumphase})}
 With a calculation  similar to that of  \cref{ex:selectCH}, we find that, 

\[
\rho(p_n,q_n)^{2k(m_{n,\ell_0,\min}-k)} \le e^{-k(c_n^{1/2}-d_n^{1/2})^2 / (16\ell_0)}.
\]
Using the bound \(2^{\ell_0(\ell_0-1)k}\binom{n(\ell_0-1)}{\ell_0(\ell_0-1)k}\le \rh{\frac{2en}{\ell_0k}}^{\ell_0(\ell_0-1)k}\), we see that the posterior \begin{align*}
P_{\theta_{0,n}}\Pi_n(W_{n,\ell_0,k_n}\mid X^n)\le & 2 \sum_{k=k_n}^{\floor{m_{n,\ell_0,\max}/2}}\rh{\frac{2en}{\ell_0k}e^{-(c_n^{1/2}-d_n^{1/2})^2/(16\ell_0^2(\ell_0-1))}}^{\ell_0(\ell_0-1)k}\\\le &  2\frac{\rh{\frac{2en}{\ell_0k_n}e^{-(c_n^{1/2}-d_n^{1/2})^2 /(16\ell_0^2(\ell_0-1))}}^{\ell_0(\ell_0-1)k_n}}{1-\rh{\frac{2en}{\ell_0k_n}e^{-(c_n^{1/2}-d_n^{1/2})^2 /(16\ell_0^2(\ell_0-1))}}^{\ell_0(\ell_0-1)}}.
\end{align*}
Let \((\delta_n)_{n\ge 1}\) a sequence in \((0,1)\), such that \(\delta_nn\) is an integer and \(\delta_n\) converges to a number  \(\delta\in [0,1)\). 
For \(k_n=\delta_nn\), we see 
\[
P_{\theta_{0,n}}\Pi_n(W_{n,\ell_0,k_n}\mid X^n)\le 2\frac{\rh{2e\delta_n^{-1}\ell_0^{-1}e^{-(c_n^{1/2}-d_n^{1/2})^2/(16\ell_0^2(\ell_0-1))}}^{\delta_n\ell_0(\ell_0-1)n}}{1-\rh{2e\delta_n^{-1}\ell_0^{-1}e^{-(c_n^{1/2}-d_n^{1/2})^2/(16\ell_0^2(\ell_0-1)) }}^{\ell_0(\ell_0-1)}}.
\]

It follows from \cref{lem:inequalityofanepowerdividedbyoneminusanepower} that when \(\delta_nn\ge2\), and \[
\ell_0(\ell_0-1)\vh{-1 -\log 2+\log \delta_n +\log \ell_0+(c_n^{1/2}-d_n^{1/2})^2/(16\ell_0^2(\ell_0-1))}\ge \sqrt{\frac2{\delta_nn}},
\]
\[
P_{\theta_{0,n}}\Pi_n(W_{n,\ell_0,k_n}\mid X^n)\le 2 e^{-\delta_n\ell_0(\ell_0-1)\vh{-1 -\log 2+\log \delta_n +\log \ell_0+(c_n^{1/2}-d_n^{1/2})^2/(16\ell_0^2(\ell_0-1))}n/4}.
\]
We define the Hamming metric \(m_n\), closely related to \(r_n\), on the parameter space, again stepwise via a metric \(m_n'\) on \(\Theta_n'\). Define  \(m_n'\) on \(\Theta_n'\), by \[
m_n'(\theta',\eta')=\sum_{i=1}^n \mathbb{I}_{\theta_i'\neq \eta_i'},
\]
and next we define \begin{equation}\label{eq:definitionmn}
m_n(\theta,\eta)=\min_{\theta'\in\theta,\eta'\in\eta} m_n'(\theta',\eta')
\end{equation}
on \(\Theta_n\). The quantity \(m_n(\hat \theta_n,\theta_{0,n})\) counts the number of misspecified labels of an estimator \(\hat \theta_{ n}\). 
Let \[
\bar B_{n,k_n}(\theta_n)=\set{\eta_n \in\Theta_{n,\ell_0}:  r_{n}(\theta_n,\eta_n)\le k_n 
}\]
and 
\begin{equation}\label{eq:definitionballbnkn}
B_{n,k_n}(\theta_n)=\set{\eta_n \in\Theta_n:  m_n(\theta_n,\eta_n)\le k_n}.
\end{equation}

It follows from  \cref{eq:equivalencemandr} that \begin{equation}\label{eq:BsubsetofbarB}
\bar B_{n,k_n/(\ell_0(\ell_0-1))}(\theta_n)\subseteq B_{n,k_n}(\theta_n) \quad\text{and clearly}\quad  W_{n,\ell_0,k_n}=\Theta_{n,\ell_0}\weg \bar B_{n,k_n-1}(\theta_{0,n}).
\end{equation} %

Let now \(\delta_n\) be a sequence of positive numbers such that \(\delta_nn/[\ell_0(\ell_0-1)]\) are integers, \(\delta_n/[\ell_0(\ell_0-1)]\in(0,1)\) and decreases to a number \(\delta\)  in \([0,1)\), then 
\begin{align*}
&P_{\theta_{0,n}}\Pi_n(\Theta_{m_{n,\ell_0}}\weg B_{n,\delta n}\mid X^n)\\
\le & P_{\theta_{0,n}}\Pi_n(\Theta_{m_{n,\ell_0}}\weg \bar B_{n, \delta_nn/[\ell_0(\ell_0-1)]-1}\mid X^n)\\
= & P_{\theta_{0,n}}\Pi_n(W_{n,\ell_0,\delta_nn/[\ell_0(\ell_0-1)]}\mid X^n)\\
 \le &  2 e^{-\delta_n\vh{-1 -\log 2 +\log \delta_n - \log (\ell_0-1)+(c_n^{1/2}-d_n^{1/2})^2/(16\ell_0^2(\ell_0-1))}n/4}.
\end{align*}

When \(\delta_n\to 0\), \(-\log \delta_n\to\infty\), so with the result of \cref{ex:kestenstigumphase} almost exact convergence is achieved once \((c_n^{1/2}-d_n^{1/2})^2\to \infty\) (however slowly).
 To be exact, for \((c_n^{1/2}-d_n^{1/2})^2\ge 48 L^2\log L\),  \(\frac{\delta_nn}{L(L-1)}\ge2\), and \begin{equation}\label{eq:sufficientconditionforposteriorconditioninKSphase}
-1 -\log 2+\log \delta_n -\log( L-1)+(c_n^{1/2}-d_n^{1/2})^2/(16L^2(L-1))\ge \sqrt{\frac{2L(L-1)}{\delta_nn}},
\end{equation}

\begin{align*}
	P_{\theta_{0, n}}\Pi_n(B_{n,\delta n})\mid X^n) \le  1- L_n \expa{-n{\frac1{48L^2}(\sqrt{c_n}-\sqrt{d_n})^2
	}}\\
- 2 e^{-\delta_n\rh{-1 -\log 2+\log \delta_n -\log( L-1)+(c_n^{1/2}-d_n^{1/2})^2/(16L^2(L-1))}n/4}.
\end{align*}

Note that condition \cref{eq:sufficientconditionforposteriorconditioninKSphase} is in particular satisfied when \[
\delta_n:=2(L-1)e^{2-(c_n^{1/2}-d_n^{1/2})^2/(16L^2(L-1))}
\]
and  \((c_n^{1/2}-d_n^{1/2})^2\) converges to infinity. Then \(\delta_n\to0\) and \[
P_{\theta_{0, n}}\Pi_n(B_{n,\delta n}\mid X^n) \ge  1-  L_n \expa{-n{\frac1{48L^2}\rh{\sqrt{c_n}-\sqrt{d_n}}^2
}}- 2 e^{-\delta_n n/4}. 
\]

For fixed \(\delta_0<1/L\), setting \((c_n^{1/2}-d_n^{1/2})^2\) so that \(\delta_0=2(L-1)e^{2-(c_n^{1/2}-d_n^{1/2})^2/(16L^2(L-1))}\),  and \(\delta_n=\floor{2(L-1)ne^{2-(c_n^{1/2}-d_n^{1/2})^2/(16L^2(L-1))}}/n\) gives concentration in a ball of radius \(\delta_0n\). 
\closebox\end{example}

\section{Coverage of credible sets with examples}\label{sec:examplecrediblesets}

Conditionally on an observation \(X^n\), a credible set of credible level \(1-\alpha_n\) is a measurable subset \(D_n(X^n)\) of the parameter set with posterior mass at least \(1-\alpha_n\):
\[
\Pi_n(D_n(X^n)\mid X^n)\ge 1-\alpha_n.
\]
In our (discrete, finite) setting any set-valued map \(x^n\mapsto B_n(x^n)\subseteq\Theta_n\), the corresponding map \(x^n\mapsto \Pi_n(B_n(x^n)\mid x^n)\) is measurable and positive, and hence the integral  \(P_{\theta_0}\Pi_n(B_n(X^n)\mid X^n)\) is well-defined, see \cref{app:defs} for details. From this perspective, a credible set (of confidence level $1-\alpha_n$) is a set-valued map \(x^n\mapsto D_n(x^n) \) satisfying \(\Pi_n(D_n(x^n)\mid x^n)\ge 1-\alpha_n,\) for every \(x^n\in\scrX_n\). 
In nonparametric setting, credible sets can have bad coverage: \cite{freedman1999} provides us with examples. However in this section we 
show that in the case of exact recovery credible sets cover \(\theta_{ 0,n}\) with high probability.
 In case of almost exact recovery we make the credible sets larger in order to guarantee asymptotic coverage, using ideas of \cite{kleijn2016} and \cite{kleijnwaaij18}.

\begin{lemma}\label{lem1}
	Let \(n\ge1\). Let \(x^n\to B_n(x^n)\subset\Theta_n\) be a set valued map, such that  \(P_{\theta_0}\Pi_n(B_n(X^n)\mid X^n)\ge 1-a_n\), with \(0<a_n<1\).
	 Then, for every \(0<r_n<1\), \[
	P_{\theta_0}\left[\Pi_n(B_n(X^n)\mid X^n)\ge 1-r_n\right]\ge  1-\frac1{r_n}a_n.\]
\end{lemma}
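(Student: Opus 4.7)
The statement is a straightforward Markov-inequality argument applied to the posterior-mass shortfall. My plan is to define the non-negative random variable
\[
Y(X^n) := 1 - \Pi_n(B_n(X^n)\mid X^n),
\]
which is well-defined and measurable because, as noted in the paragraph preceding the lemma, the map \(x^n\mapsto\Pi_n(B_n(x^n)\mid x^n)\) is measurable on the discrete space \(\scrX_n\), and bounded between \(0\) and \(1\) since \(\Pi_n(\cdot\mid x^n)\) is a probability measure on \(\Theta_n\).

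The first step is to compute the expectation of \(Y\) under \(P_{\theta_0}\):
\[
P_{\theta_0} Y = 1 - P_{\theta_0}\Pi_n(B_n(X^n)\mid X^n) \le 1 - (1-a_n) = a_n,
\]
using the hypothesis of the lemma. The second step is to apply Markov's inequality to the non-negative random variable \(Y\) at the level \(r_n > 0\), which yields
\[
P_{\theta_0}(Y \ge r_n) \le \frac{P_{\theta_0} Y}{r_n} \le \frac{a_n}{r_n}.
\]
Taking complements gives \(P_{\theta_0}(Y < r_n) \ge 1 - a_n/r_n\), and since \(Y < r_n\) implies \(\Pi_n(B_n(X^n)\mid X^n) > 1 - r_n \ge 1 - r_n\), we obtain
\[
P_{\theta_0}\bigl[\Pi_n(B_n(X^n)\mid X^n) \ge 1 - r_n\bigr] \ge 1 - \frac{a_n}{r_n},
\]
which is the desired inequality.

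There is no real obstacle here; the only subtlety is the measurability of \(x^n\mapsto\Pi_n(B_n(x^n)\mid x^n)\), but the authors have already addressed this point explicitly by reducing the question to the discrete setting in the paragraph immediately above the lemma. The proof is therefore essentially a two-line application of Markov's inequality; no construction of tests, no use of the rate machinery from Sections 3--4, and no assumption on the structure of \(B_n\) beyond its being a set-valued map into \(\Theta_n\) is needed.
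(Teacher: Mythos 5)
Your proof is correct and is essentially the paper's argument: the paper's proof by contradiction, which bounds \(P_{\theta_0}\Pi_n(B_n(X^n)\mid X^n)\) by splitting over the event \(E_n=\{\Pi_n(B_n(X^n)\mid X^n)\ge 1-r_n\}\) and its complement, is exactly Markov's inequality for the nonnegative variable \(1-\Pi_n(B_n(X^n)\mid X^n)\) written out by hand. You simply invoke Markov directly, which is a cleaner presentation of the same first-moment argument; no gap.
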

\begin{proof}	 
	Let \(E_n=\set{\omega:\Pi_n(B_n\mid X^n(\omega))\ge 1-r_n}\) 
	be the event that the posterior mass of \(B_n(X^n)\) is at least \(1-r_n\). Let \(\delta>0\). Suppose that  \(P_{\theta_0}(E_{n})\le 1-\frac1{r_n}a_n-\delta\). Then \begin{align*}
	P_{\theta_0}\Pi_n(B_n(X^n)\mid X^{n})\le P_{\theta_0}( E_{n})+(1-r_n)P_{\theta_0}(E_{n}^c)
	= P_{\theta_0}( E_{n}) + (1-r_n) (1-P_{\theta_0}( E_{n}))\\
	= r_nP_{\theta_0}( E_{n}) + 1-r_n \le r_n\rh{1-\frac1{r_n}a_n-\delta} + 1-r_n=1-a_n-\delta r_n<1-a_n,
	\end{align*}
	which contradicts with our assumption that  \(P_{\theta_0}\Pi_n(B_n(X^n)\mid X^n)\ge 1-a_n\). Hence \(P_{\theta_0}(E_{n})> 1-\frac1{r_n}a_n-\delta\). As this holds for every \(\delta>0\), it follows that \(P_{\theta_0}(E_{n})\ge 1-\frac1{r_n}a_n\).
\end{proof}

\begin{lemma}\label{lem:crediblesettoconfidencesets}
	Suppose \(P_{\theta_0}\Pi_n(\set{\theta_{0,n}}\mid X^n)\ge 1-x_n\), where \(0<x_n<1\). Let \(\alpha_n\in(0,1)\) and \(D_n(X^n)\) a \( 1-\alpha_n\) credible set, i.e. \(\Pi_n(D_n(X^n)\mid X^n)\ge 1-\alpha_n\). Then \[P_{\theta_{0, n}}(\theta_{0, n}\in D_n(X^n))\ge  1-\frac1{1-\alpha_n}x_n.\] 
\end{lemma}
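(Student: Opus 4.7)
The plan is to reduce this to Lemma \ref{lem1} via the simple containment observation that a point with large posterior mass must lie in any large credible set. Specifically, suppose for some realization $X^n$ that $\theta_{0,n}\notin D_n(X^n)$; then $\{\theta_{0,n}\}\subseteq D_n(X^n)^c$, hence
\[
\Pi_n(\{\theta_{0,n}\}\mid X^n)\le \Pi_n(D_n(X^n)^c\mid X^n)\le \alpha_n,
\]
using the credibility bound $\Pi_n(D_n(X^n)\mid X^n)\ge 1-\alpha_n$. Contrapositively, on the event $\{\Pi_n(\{\theta_{0,n}\}\mid X^n)>\alpha_n\}$ we automatically have $\theta_{0,n}\in D_n(X^n)$.

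Next I would invoke Lemma \ref{lem1} with $B_n(x^n)=\{\theta_{0,n}\}$, $a_n=x_n$, and $r_n\in(0,1-\alpha_n)$ arbitrary. This yields
\[
P_{\theta_{0,n}}\bigl[\Pi_n(\{\theta_{0,n}\}\mid X^n)\ge 1-r_n\bigr]\ge 1-\frac{x_n}{r_n}.
\]
Since $1-r_n>\alpha_n$, the left-hand event is contained in $\{\theta_{0,n}\in D_n(X^n)\}$ by the previous paragraph, so
\[
P_{\theta_{0,n}}(\theta_{0,n}\in D_n(X^n))\ge 1-\frac{x_n}{r_n}.
\]
Letting $r_n\uparrow 1-\alpha_n$ gives the claimed bound $1-\frac{x_n}{1-\alpha_n}$.

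There is essentially no obstacle here: the whole argument is a tautology once one notices the containment $\{\theta_{0,n}\notin D_n\}\subseteq\{\Pi_n(\{\theta_{0,n}\}\mid X^n)\le \alpha_n\}$, and then Lemma \ref{lem1} (equivalently a Markov inequality applied to the non-negative variable $1-\Pi_n(\{\theta_{0,n}\}\mid X^n)$, whose expectation is at most $x_n$) does the rest. The only mildly delicate point is the strict vs.\ non-strict inequality issue in the implication ``$\Pi_n(\{\theta_{0,n}\}\mid X^n)\ge \alpha_n \Rightarrow \theta_{0,n}\in D_n$'', which is handled cleanly by first applying Lemma \ref{lem1} with any $r_n<1-\alpha_n$ and then passing to the limit, so no measurability or continuity subtleties arise.
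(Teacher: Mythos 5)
Your proof is correct and follows essentially the same route as the paper: both arguments invoke Lemma \ref{lem1} for the point mass at \(\theta_{0,n}\), note that a posterior mass exceeding \(\alpha_n\) forces \(\theta_{0,n}\in D_n(X^n)\) (the paper phrases this as \(\set{\theta_{0,n}}\) and \(D_n(X^n)\) not being disjoint when their masses sum to more than one), and then pass to the limit in the auxiliary threshold to reach the constant \(1/(1-\alpha_n)\). No gaps.
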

\begin{proof}
	Let \(E_n=\set{\Pi(\set{\theta_0}\mid X^n)\ge r}\) be the event that \(\set{\theta_0}\) has posterior mass at least \(r\), \(r>\alpha_n\). 
	It follows from \cref{lem1} that \(P_{\theta_{0, n}}(E_n)\ge  1-\frac1{1-r}x_n\).
	As \(D_n(X^n)\) has at least \(1-\alpha_n\) posterior mass, \(D_n(X^n)\) and \(\set{\theta_0}\) cannot be disjoint on the event \(E_n\), as \(1-\alpha_n+r>1\). In other words, \(\theta_0\in D_n(X^n)\) on \(E_n\). So \(P_{\theta_0}(\theta_0\in D_n(X^n))\ge P_{\theta_0}(E_n)\ge  1-\frac1{1-r}x_n\). As this holds for any \(r>\alpha_n \) we have \(P_{\theta_0}(\theta_0\in D_n(X^n))\ge 1-\frac1{1-\alpha_n}x_n\).
\end{proof}

\begin{example}\label{ex:densecrediblesets}{\it (Dense phase, continuation of \cref{ex:selectdense,ex:denseconvergenceatapoint})}
	Let \(D_n(X^n) \) be a \(1-\alpha_n\) credible set, \(0<\alpha_n<1\). With  \cref{ex:denseconvergenceatapoint} and \cref{lem:crediblesettoconfidencesets} we get \begin{align*}
	P_{\theta_{0}}(\theta_{0,n}\in D_n(X^n))\ge 1- \frac{1}{1-\alpha_n}   \rh{
L_ne^{-b_nn^2/(12L_n^2)} +  2\sqrt ee^{-nb_n/(8L_n)}	
}.
	\end{align*}
\closebox\end{example}

\begin{example}\label{ex:CHcredibleset} {\it (Chernoff-Hellinger phase, continuation of \cref{ex:selectCH,ex:CHconcentrationatapoint})}
	Let \(D_n(X^n) \) be a \(1-\alpha_n\) credible set, \(0<\alpha_n<1\). With  \cref{ex:CHconcentrationatapoint} and \cref{lem:crediblesettoconfidencesets} we get \begin{align*}
	P_{\theta_{0}}(\theta_{0,n}\in D_n(X^n))\ge 1- \frac{1}{1-\alpha_n}   \bigg[L_n \expa{-n{\frac1{48}(\sqrt{a_n} + \sqrt{b_n})^2\frac{ \log n}{L_n^2}}} & \\+ 2\sqrt e n^{-(a_n^{1/2}-b_n^{1/2})^2 / (32L_n)}\bigg]&.
	\end{align*}
\closebox\end{example}

\subsection{Enlarged credible sets}

In the case of almost exact convergence, credible sets need to be enlarged, in order to make them asymptotic confidence sets. 

Let \(n\ge 1\), let \(D_n(X^n)\) be a credible set. For a  nonnegative integer \(k_n\), we define the \(k_n\)-enlargement of \(D_n(X^n)\) to be the set  \[
C_n(X^n)=\set{\theta_n\in\Theta_n:\exists \eta_n\in D_n(X^n), m_n(\theta_n,\eta_n)\le k_n}.
\]

Recall the definition of \(B_{n,k_n}(\theta_{0,n})\) in \cref{eq:definitionballbnkn}, 
\[
B_{n,k_n}(\theta_n)=\set{\eta_n \in\Theta_n:  m_n(\theta_n,\eta_n)\le k_n}.
\]

We have the following result

\begin{lemma}\label{lem:coverageofenlargedcrediblesets}
Suppose \(P_{\theta_0}\Pi_n(B_{n,k_n}(\theta_{0,n})\mid X^n)\ge1-x_n\), \(0<x_n<1\). Let \(\alpha_n\in(0,1)\) and \(D_n(X^n)\) a \(1-\alpha_n\)-credible set, with \(k_n\)-enlargement \(C_n\), then \[P_{\theta_{0,n}}(\theta_{0,n}\in C_n(X^n))\ge 1-\frac1{1-\alpha_n}x_n.\]. 
\end{lemma}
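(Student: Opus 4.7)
The plan is to mirror the argument of \cref{lem:crediblesettoconfidencesets}, replacing the singleton $\set{\theta_{0,n}}$ by the Hamming ball $B_{n,k_n}(\theta_{0,n})$, and exploiting the definition of the $k_n$-enlargement together with the symmetry of $m_n$.

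First I would apply \cref{lem1} to the (constant) set-valued map $x^n\mapsto B_{n,k_n}(\theta_{0,n})$: for any $r\in(\alpha_n,1)$, taking $r_n=1-r$ gives
\[
P_{\theta_{0,n}}\bigl(E_n\bigr)\ge 1-\frac{1}{1-r}\,x_n,\qquad E_n:=\set{\Pi_n(B_{n,k_n}(\theta_{0,n})\mid X^n)\ge r}.
\]

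Next I would argue that on the event $E_n$, the credible set $D_n(X^n)$ and the ball $B_{n,k_n}(\theta_{0,n})$ must intersect: indeed their posterior masses sum to at least $(1-\alpha_n)+r>1$, so by inclusion-exclusion there exists $\eta_n\in D_n(X^n)\cap B_{n,k_n}(\theta_{0,n})$. Since $m_n$ is symmetric (immediate from \cref{eq:definitionmn}), $m_n(\theta_{0,n},\eta_n)\le k_n$ is equivalent to the existence of $\eta_n\in D_n(X^n)$ with $m_n(\eta_n,\theta_{0,n})\le k_n$, which by the definition of the $k_n$-enlargement means $\theta_{0,n}\in C_n(X^n)$.

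Combining the two steps, $E_n\subseteq\set{\theta_{0,n}\in C_n(X^n)}$, so
\[
P_{\theta_{0,n}}\bigl(\theta_{0,n}\in C_n(X^n)\bigr)\ge P_{\theta_{0,n}}(E_n)\ge 1-\frac{1}{1-r}x_n.
\]
Letting $r\downarrow\alpha_n$ yields the claimed bound $1-\frac{1}{1-\alpha_n}x_n$. There is no real obstacle here; the only point requiring a line of care is the symmetry of $m_n$, which ensures that intersection of $D_n(X^n)$ with a ball around $\theta_{0,n}$ places $\theta_{0,n}$ inside the enlargement of $D_n(X^n)$ (and not the other way round).
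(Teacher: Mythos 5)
Your argument is correct and is essentially identical to the paper's own proof: the same application of \cref{lem1} to the event $E_n=\set{\Pi_n(B_{n,k_n}(\theta_{0,n})\mid X^n)\ge r}$, the same non-disjointness step via $(1-\alpha_n)+r>1$ placing $\theta_{0,n}$ in the enlargement, and the same limit $r\downarrow\alpha_n$. Your explicit remark on the symmetry of $m_n$ is a minor clarification the paper leaves implicit, nothing more.
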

\begin{proof}
	Let \(E_n=\set{\Pi(B_{n,k_n}(\theta_{0,n})\mid X^n)\ge r}\) be the event that \(B_{n,k_n}(\theta_{0,n})\) has posterior mass at least \(r\), \(r>\alpha_n\). 
	It follows from \cref{lem1} that \(P_{\theta_{0, n}}(E_n)\ge  1-\frac1{1-r}x_n\). 
	As \(D_n(X^n)\) has at least \(1-\alpha_n\) posterior mass, \(D_n(X^n)\) and \(B_{n,k_n}(\theta_{0,n})\) cannot be disjoint on the event \(E_n\), as \(1-\alpha_n+r>1\). Hence \(\theta_0\in C_n(X^n)\) on \(E_n\). So \(P_{\theta_0}(\theta_0\in C_n(X^n))\ge P_{\theta_0}(E_n)\ge  1-\frac1{1-r}x_n\). As this holds for any \(r>\alpha_n \) we have \(P_{\theta_0}(\theta_0\in C_n(X^n))\ge 1-\frac1{1-\alpha_n}x_n\).
\end{proof}

\begin{example}\label{ex:kestenstigumphasecredibleset} {\it (Kesten-Stigum phase, continuation of \cref{ex:kestenstigumphase,ex:kestenstigumphaseconvergenceinaneighbourhood}.)}
 Let  \(D_n(X^n)\) a \(1-\alpha_n\)-credible set and \(C_n(X^n)\) the \(a_n n\) enlargement of \(D_n(X^n)\), with
 \(a_n:=2(L-1)e^{2-\frac1{4L}(c_n^{1/2}-d_n^{1/2})^2}.\)
 With \cref{ex:kestenstigumphaseconvergenceinaneighbourhood} and \cref{lem:coverageofenlargedcrediblesets} we get \[P_{\theta_{0, n}}(\theta_{0,n}\in C_n)\ge 1- \frac1{1-\alpha_n}\rh{   L_n \expa{-n{\frac1{48L^2}\rh{\sqrt{c_n}-\sqrt{d_n}}^2
 	}} +  2 e^{-a_n n/4}} .\]
\closebox\end{example}

\section{Consistent hypothesis testing with posterior odds}\label{sec:hypothesistesting}

Besides parameter estimation, an interesting question is testing between two alternatives, whether the true parameter \(\theta_{ 0,n}\) is in the set \(A_n\) or in the set \(B_n\), where \(A_n,B_n\subseteq \Theta_n\) are disjoint non-random sets. In particular we consider symmetric testing between two alternatives 
\[
H_0:\theta_{ 0,n}\in A_n\quad \text{versus}\quad H_1:\theta_{ 0,n}\in B_n.
\]Taking, for example, \(A_n= \Theta_{n ,\ell_0}\) and \(B_n=\Theta_n\weg\Theta_{n ,\ell_0}\) allows us to test whether the true parameter has \(\ell_0\) classes or not. The case \(\ell_0=1\)  is testing between the Erd\H os-R\'enyi model and the stochastic block model. 
We establish frequentist results for posterior odds testing between \(A_n\) and \(B_n\).%

We use posterior odds to test between the models, which is defined by \[
F_n=\frac{\Pi_n(B_n\mid X^n)}{\Pi_n(A_n\mid X^n)}.
\]
 Obviously, \(F_n<1\) counts as evidence in favour of \(H_0\) and \(F_n>1\) as evidence in favour of \(H_1\). In the following theorem we give sufficient conditions for this Bayesian test to be  valid in a frequentist sense.

\begin{theorem}\label{thm:posteriorodds}
 Let \(\theta_{0,n}\in \Theta_n\). 	When   
	\(P_{\theta_{0,n}}\Pi_n(A_n\mid X^n)\ge  1-a_n\), with \(0<  a_n<1\), 
	then
	\[
	P_{\theta_{0, n}}(F_n> r_n)\le 
	2a_n\rh{1+    \frac{1 }{r_n}}.
	\]
	If, in addition,  \(P_{\theta_{0,n}}\Pi_n(B_n\mid X^n)\le  b_n\), 
	then
	\[
	P_{\theta_{0, n}}(F_n> r_n)\le 2a_n+\frac{2b_n}{r_n}.\]
\end{theorem}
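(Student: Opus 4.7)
The plan is to decompose the event $\{F_n>r_n\}$ according to whether the posterior mass on $A_n$ is bounded below by a constant. Specifically, I would split
\begin{equation*}
P_{\theta_{0,n}}(F_n>r_n)\le P_{\theta_{0,n}}\bigl(\Pi_n(A_n\mid X^n)<\tfrac12\bigr)+P_{\theta_{0,n}}\bigl(F_n>r_n,\,\Pi_n(A_n\mid X^n)\ge\tfrac12\bigr).
\end{equation*}
The first term is controlled by applying \cref{lem1} with $r_n=\tfrac12$ to the hypothesis $P_{\theta_{0,n}}\Pi_n(A_n\mid X^n)\ge1-a_n$, giving an upper bound of $2a_n$.

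For the second term, on the event $\{\Pi_n(A_n\mid X^n)\ge\tfrac12\}$ the inequality $F_n>r_n$ forces $\Pi_n(B_n\mid X^n)>r_n\cdot\Pi_n(A_n\mid X^n)\ge r_n/2$. So this term is bounded by $P_{\theta_{0,n}}(\Pi_n(B_n\mid X^n)>r_n/2)$. From here the two statements of the theorem diverge.

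For the first (unconditional) bound, I would use disjointness $A_n\cap B_n=\emptyset$, which gives $\Pi_n(B_n\mid X^n)\le 1-\Pi_n(A_n\mid X^n)$. Hence $\{\Pi_n(B_n\mid X^n)>r_n/2\}\subseteq\{\Pi_n(A_n\mid X^n)<1-r_n/2\}$, and a second application of \cref{lem1} (with threshold $r_n/2$ in place of the $r_n$ in the lemma) yields the bound $2a_n/r_n$. Adding gives $2a_n(1+1/r_n)$. For the second (sharper) bound, I would instead apply Markov's inequality directly to the nonnegative random variable $\Pi_n(B_n\mid X^n)$, using the hypothesis $P_{\theta_{0,n}}\Pi_n(B_n\mid X^n)\le b_n$ to obtain $P_{\theta_{0,n}}(\Pi_n(B_n\mid X^n)>r_n/2)\le 2b_n/r_n$, which combines with the $2a_n$ term to give $2a_n+2b_n/r_n$.

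The argument is essentially routine once the split threshold $\tfrac12$ is chosen; there is no real obstacle, only the need to make sure that (i) the constant $\tfrac12$ in the split is consistent with the stated prefactors in the theorem, and (ii) the two bounding strategies (via disjointness versus via Markov applied to $b_n$) are correctly used in their respective parts. One could optimise over the split threshold to obtain a slightly sharper constant, but choosing $\tfrac12$ reproduces the theorem as stated.
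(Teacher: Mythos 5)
Your proof is correct and follows essentially the same route as the paper: split on the event \(\{\Pi_n(A_n\mid X^n)\ge \tfrac12\}\), control its complement via \cref{lem1}, and bound the remaining term by Markov's inequality applied to \(\Pi_n(B_n\mid X^n)\). The only immaterial difference is in the first bound: the paper deduces it from the second by noting that disjointness of \(A_n\) and \(B_n\) together with \(P_{\theta_{0,n}}\Pi_n(A_n\mid X^n)\ge 1-a_n\) gives \(P_{\theta_{0,n}}\Pi_n(B_n\mid X^n)\le a_n\) (i.e.\ take \(b_n=a_n\)), whereas you invoke \cref{lem1} a second time with threshold \(r_n/2\), which is equally valid (and trivially so when \(r_n\ge 2\), since then \(\{\Pi_n(A_n\mid X^n)<1-r_n/2\}\) is empty).
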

\begin{proof}
	From the posterior convergence condition on \(A_n\) it follows that \(P_{\theta_{0,n}}\Pi_n(B_n\mid X^n)\le  a_n\). Hence the first result follows from the second, so we assume \(P_{\theta_{0,n}}\Pi_n(B_n\mid X^n)\le  b_n\) in what follows. 
	Let \(E_n=\set{\Pi_n(A_n\mid X_n)\ge 1/2}\) be the event that the posterior gives at least mass \(1/2\)  to \(A_n\). It follows from \cref{lem1} that \(P_{\theta_{0, n}}(E_n)\ge 1-2a_n.\)
	So	\begin{align*}
	P_{\theta_{0, n}}(F_n>r_n) 
	\le & P_{\theta_{0, n}} \rh{ \Pi_n(B_n\mid X_n)\ge r_n/2 } + 2a_n.
\end{align*}  
The probability on the right is by the Markov inequality bounded by \[\frac 2{r_n}P_{\theta_{0,n}}\Pi_n(B_n\mid X_n)\le \frac{2b_n}{r_n}.\] We thus arrive at the result
	\[
	P_{\theta_{0, n}}(F_n> r_n)\le 
	2a_n+\frac{2b_n}{r_n}.
	\] 
\end{proof}
Suppose one rejects the null-hypothesis when \(F_n>r_n\), for some \(r_n>0\). The first order error is when \(H_0\) is true, so \(\theta_{0,n}\) is in fact in \(A_n\), but \(H_0\) is rejected (so \(F_n>r_n\)). The probability of this error is bounded by the theorem above. The error of second kind is when in fact \(H_1\) is true, but \(H_0\) is not rejected. This probability is given by \(P_{\theta_{0,n}}(F_n<r_n),\) \(\theta_{0,n}\in B_n\). As \(P_{\theta_{0,n}}(F_n<r_n)=P_{\theta_{0,n}}(F_n^{-1}>1/r_n),\) and reversing the roles of \(A_n \)
and \(B_n\) in \cref{thm:posteriorodds}, the probability of this event is also covered by the theorem, using posterior convergence results for \(\theta_{ 0,n}\in B_n\). The power of the test is defined as the probability of rejecting the null hypothesis when \(H_1\) is true.
As 
\(P_{\theta_{0, n}}(F_n>r_n)=P_{\theta_{0, n}}(F_n^{-1}<1/r_n)=1-P_{\theta_{0, n}}(F_n^{-1}\ge 1/r_n)\), this probability can be lower bounded with the theorem above. 

\subsection{Examples: consistent Bayesian testing}\label{sec:examplesbayesiantesting}

In this section we determine conditions for Bayesian testing for different sparsity regimes.   We consider the same priors and models as in example  \cref{sec:examplesconvergencetothetruemodel}. 

We consider testing
\[
H_0: \theta_{ 0,n}\in\Theta_{n,\ell_0}\quad\text{versus}\quad 
H_1: \theta_{ 0,n}\in\Theta_n\weg\Theta_{n,\ell_0}.
\]

So we consider testing whether the true parameter has \(\ell_0\) classes or not. The case \(\ell_0=1\) is testing between the Erd\H os-R\'enyi graph and the stochastic block model.
\begin{example}\label{ex:selectdensetesting}{\it (Dense phase, continuation of \cref{ex:selectdense})}
It follows from \cref{ex:selectdense} and \cref{thm:posteriorodds} that 
	\[
	P_{\theta_{0, n}}(F_n>r_n) \le 2L_n\rh{1+\frac1{r_n}} e^{-b_nn^2/(12L_n^2)}. 
	\]  
	Taking \(r_n\equiv 1 \), the error of first kind is bounded by \(4L_ne^{-b_nn^2/(12L_n^2)}\).
	Reversing the roles of \(A_n\) and \(B_n\) in \cref{thm:posteriorodds} gives the error of second kind is bounded by \( 4e^{-b_nn^2/(12L_n^2)}.\)
	 and the power of the test is lower bounded by \(1-4e^{-b_nn^2/(12L_n^2)}\).
\closebox\end{example}

\begin{example} {\it (Chernoff-Hellinger phase, continuation of \cref{ex:selectCH})}
	\label{ex:selectCHtoetsen}
	With \cref{ex:selectCH} and \cref{thm:posteriorodds} we get for \(\theta_{ 0,n}\in\Theta_{n ,\ell_0}\)
\[
	P_{\theta_{0, n}}(F_n>r_n) \le 2L_n\rh{1+\frac1{r_n}}\expa{-\frac n {48}\rh{\sqrt{a_n}-\sqrt{b_n}}^2\frac{ \log n}{L_n^2}
	}
	\]
	and for \(\theta_{ 0,n}\in \Theta_n\weg \Theta_{n ,\ell_0}\),  
	\[
	P_{\theta_{0, n}}(F_n^{-1}>r_n^{-1}) \le 2\rh{1+r_n}\expa{-\frac n {48}\rh{\sqrt{a_n}-\sqrt{b_n}}^2\frac{ \log n}{L_n^2}
	},
	\]
	which  is the bound for the second order error.
	The power for the test is lower bounded by \[
	1- 2\rh{1+r_n}\expa{-{\frac n{48}\rh{\sqrt{a_n}-\sqrt{b_n}}^2\frac{ \log n}{L_n^2}
	}}.
	\]
\closebox\end{example}

\begin{example} {\it (Kesten-Stigum phase, continuation \cref{ex:kestenstigumphase})}\label{ex:kestenstigumphasetoetsen}
	 It follow from \cref{ex:kestenstigumphase} and \cref{thm:posteriorodds} that 
	\[
	P_{\theta_{0,n}} (F_n>r_n) \le 2L\rh{1+\frac1{r_n}}   \expa{-{\frac n{48L^2}\rh{\sqrt{c_n}-\sqrt{d_n}}^2}}.
	\]
	And similar results as in the example above hold for the second order error and the power of the test.  
\closebox\end{example}

\appendix

\section{Definitions and conventions}
\label{app:defs}

\paragraph{Notation}
	When \(S\) is a set, \(|S|\) denotes the cardinality of \(S\). %

We assume for every $n\geq1$, a random graph
$\samplen$ taking values in the (finite) space
$\scrX_n$ of all undirected simple graphs (i.e. no self-loops or multiple edges) with $n$ vertices.
Let $\scrB_n$ be the powerset of $\scrX_n$, be the \(\sigma\)-algebra corresponding to $\scrX_n$ and let \(\scrP_n\) be the set of all probability distributions
$P_n:\scrB_n\to[0,1].$ A model is a subset $\sP_n$ of $\scrP_n$, which  is  parametrized by an bijective mapping 
$\Theta_n\rightarrow\sP_n:\theta\mapsto P_{\theta_n}.$ We equip  \(\Theta_n\) with a \(\sigma\)-algebra $\scrG_n$ and a probability measure \(\Pi_n:\scrG_n\to[0,1]\) (the so-called prior). As we only consider finite parameters sets \(\Theta_n\), we set \(\scrG_n\) to be the powerset of $\Theta_n$. As frequentists, we assume that there exists
a `true, underlying distribution for the data';  that
means that for every $n\geq1$,  the 
$n$-th graph $\samplen$ is drawn from $P_{\tht_{0,n}}$, for some $\tht_{0,n}\in\Tht_n$. We call $\tht_{0,n}$ the `true parameter'. 

\begin{definition}
Given $n\geq1$ and a prior probability measure $\Pi_n$ on
$\Tht_n$, define the \emph{$n$-th prior predictive distribution}
as:
\begin{equation}
  \label{eq:priorpred}
  P_n^{\Pi_n}(A) = \int_\Theta P_{\theta_n}(A)\,d\Pi_n(\theta),
\end{equation}
for all $A\in\scrB_n$. %
\end{definition}
The prior predictive distribution $P_n^{\Pi_n}$ is the marginal
distribution for $\samplen$ in the Bayesian perspective that
considers parameter and sample jointly
$(\theta,\samplen)\in\Theta\times\scrX_n$
as the random quantity of interest. 
\begin{definition}
\label{def:posterior}
Given $n\geq1$,
a \emph{(version of) the posterior} is any map
$\Pi_n(\,\cdot\,|\samplen=\,\cdot\,):\scrG_n\times\scrX_n\rightarrow[0,1]$
such that,
\begin{enumerate}
\item for every $B_n\in\scrG_n$, the map
  $\scrX_n\rightarrow[0,1]:
    \realizationn\mapsto\Pi(B_n|\samplen=\realizationn)$ is
  $\scrB_n$-measurable,
\item for all $A_n\in\scrB_n$ and $V_n\in\scrG_n$,
  \begin{equation}
    \label{eq:disintegration}
    \int_{A_n}\Pi_n(V_n|\samplen)\,dP_n^{\Pi_n} = 
    \int_{V_n} P_{\theta_n}(A_n)\,d\Pi_n(\theta_n).
  \end{equation}
\end{enumerate}
\end{definition}
Bayes's Rule is expressed through equality (\ref{eq:disintegration})
and is sometimes referred to as a `disintegration' (of the joint
distribution of $(\theta_n,X^n)$). 
Because the models $\scrP_n$ are dominated (denote the density
of $P_{\theta_n}$ by $p_{\theta_n}$), the fraction of integrated likelihoods,
\begin{equation}
  \label{eq:posteriorfraction}
  \Pi(V_n|\samplen)= 
  {\displaystyle
  	{\int_{V_n} p_{\theta_n}(\samplen)\,d\Pi_n(\theta_n)}} \biggm/
  {\displaystyle{\int_{\Theta_n} p_{\theta_n}(\samplen)\,d\Pi_n(\theta_n)}},
\end{equation}
for all $V_n\in\scrG_n$, $n\geq1$. %

For completeness sake, we include \cite[lemma 2.2]{kleijn2016}, which plays an essential role in our theorems on  posterior consistency.

\begin{lemma}\label{lem:posteriorconvergencekleijnlemma}
	For any \(B_n, V_n\in \scrG_n\) with \(\Pi_n(B_n)>0\) and any measurable map \(\phi_n:\scrX_n \to[0,1] \), \begin{align*}
	\int P_{\theta_n}\Pi_n(V_n\mid X^n)d\Pi_n(\theta_n\mid B_n)\le &  \int P_{\theta_n}[\phi_n(X^n)]d\Pi_n(\theta_n\mid B_n) \\
	& + \frac1{\Pi_n(B_n)} \int _{V_n}P_{\theta_n}[1-\phi_n(X^n)]d\Pi_n(\theta_n). 
	\end{align*}
\end{lemma}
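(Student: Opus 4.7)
The plan is to start from the trivial pointwise bound
\[
\Pi_n(V_n\mid X^n) \le \phi_n(X^n) + (1-\phi_n(X^n))\,\Pi_n(V_n\mid X^n),
\]
which holds because both $\phi_n$ and $\Pi_n(V_n\mid X^n)$ take values in $[0,1]$. Taking $P_{\theta_n}$-expectation and then integrating with respect to $d\Pi_n(\theta_n\mid B_n)$, the $\phi_n$-summand immediately matches the first term on the right-hand side of the claimed inequality, so it suffices to control the remainder
\[
R := \int P_{\theta_n}\bigl[(1-\phi_n(X^n))\,\Pi_n(V_n\mid X^n)\bigr]\,d\Pi_n(\theta_n\mid B_n) = \frac{1}{\Pi_n(B_n)}\int_{B_n} P_{\theta_n}\bigl[(1-\phi_n(X^n))\,\Pi_n(V_n\mid X^n)\bigr]\,d\Pi_n(\theta_n).
\]

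The central tool is the disintegration identity from \cref{def:posterior}, or, equivalently, the explicit expression \eqref{eq:posteriorfraction} for the posterior: writing $p_{\theta_n}$ for the density of $P_{\theta_n}$ with respect to some dominating measure $\mu$ (counting measure on $\scrX_n$ works since $\scrX_n$ is finite) and $p_n^{\Pi_n}$ for the corresponding prior predictive density, one has the pointwise identity $\Pi_n(V_n\mid x^n)\,p_n^{\Pi_n}(x^n) = \int_{V_n} p_{\theta_n}(x^n)\,d\Pi_n(\theta_n)$. Next I would apply Fubini to the numerator of $R$ (legitimate since every integrand is nonnegative) to obtain
\[
\Pi_n(B_n)\,R = \int (1-\phi_n(x^n))\,\Pi_n(V_n\mid x^n)\left[\int_{B_n} p_{\theta_n}(x^n)\,d\Pi_n(\theta_n)\right]d\mu(x^n).
\]

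To conclude, I would bound the inner $B_n$-integral trivially by $\int p_{\theta_n}(x^n)\,d\Pi_n(\theta_n) = p_n^{\Pi_n}(x^n)$ and then substitute $\Pi_n(V_n\mid x^n)\,p_n^{\Pi_n}(x^n) = \int_{V_n} p_{\theta_n}(x^n)\,d\Pi_n(\theta_n)$. A final swap of integrals gives
\[
\Pi_n(B_n)\,R \le \int_{V_n} P_{\theta_n}[1-\phi_n(X^n)]\,d\Pi_n(\theta_n),
\]
and dividing by $\Pi_n(B_n)$ and recombining with the $\phi_n$-term yields the claim.

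The only step requiring thought is the interplay between the $B_n$-restricted prior and the unrestricted prior predictive: the disintegration identity is only stated for the full $P_n^{\Pi_n}$, so one has to accept the inequality $\int_{B_n} p_{\theta_n}\,d\Pi_n \le p_n^{\Pi_n}$. This is precisely what generates the factor $\Pi_n(B_n)^{-1}$ on the right-hand side of the lemma, so no further sharpening is possible at this level of generality; beyond this, the proof is a direct application of Fubini and the explicit posterior formula.
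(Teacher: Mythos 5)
The paper does not prove this lemma itself: it is quoted verbatim as lemma~2.2 of \cite{kleijn2016}, so there is no in-paper proof to compare against. Your argument is correct and is essentially the same as the cited one — the pointwise decomposition \(\Pi_n(V_n\mid X^n)\le \phi_n(X^n)+(1-\phi_n(X^n))\Pi_n(V_n\mid X^n)\), the crude bound \(\int_{B_n}p_{\theta_n}\,d\Pi_n\le \int_{\Theta_n}p_{\theta_n}\,d\Pi_n\) which is exactly what produces the factor \(\Pi_n(B_n)^{-1}\), and the posterior formula \eqref{eq:posteriorfraction} combined with Fubini--Tonelli (trivially justified here since \(\Theta_n\) and \(\scrX_n\) are finite) to rewrite the remaining prior-predictive integral as \(\int_{V_n}P_{\theta_n}[1-\phi_n(X^n)]\,d\Pi_n(\theta_n)\).
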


\section{Existence of suitable tests}
\label{app:PBMtests}

Given $n\geq1$, and two class assignment vectors
$\theta_{0,n},\theta_n\in\Theta_n$,
we are interested in determining testing power, for which we need the likelihood ratio
$dP_{\theta_n}/dP_{\theta_{0,n}}$.

Fix $n\geq1$, and let $X^n$ denote the random graph associated with
$\theta_{0,n}\in\Theta_n$, and let \(\ell_0\) be the number of different labels of \(\theta_{ 0,n}\), so \(\theta_{0,n}\in \Theta_{n,\ell_0}\). Let $\theta_n$ denote another element of $\Theta_n$ and  suppose \(\theta_n\in \Theta_{n ,\ell}\), for some \(\ell \in \sL_n\) (which might or might not be equal to \(\ell_0\)). 
Compare $p_{\theta_{0,n}}(X^n)$ with $p_{\theta_n}(X^n)$ in the likelihood
ratio. Recall that the likelihood of $\theta_n$ is given by,
\[
  p_{\theta_n}(X^n)=\prod_{i<j} Q_{i,j;n}(\theta_n')^{X_{ij}}
    (1-Q_{i,j;n}(\theta_n'))^{1-X_{ij}},
\]
where \(\theta_{ n}'\) is a representation of \(\theta_{ n}\) and 
\[
Q_{i,j;n}(\theta_n') = \begin{cases}
	\,\,p_n,&\quad\text{if $\theta_{i}'=\theta_{j}'$,}\\
	\,\,q_n,&\quad\text{if $\theta_{i}'\neq\theta_{j}'$.}
\end{cases}
\]
Let \(\theta_{ 0,n}'\) be a representation of \(\theta_{ 0,n}\) and \(\theta_n'\) a representation of \(\theta_n\). 
Define two sets of edges, one consisting of all edges that connect
within a class under $\theta_{0,n}'$ and between classes under
$\theta_n'$, and another consisting of all edges that connect
between classes under $\theta_{0,n}'$ and within a class under
$\theta_n'$:
\[
  \begin{split}
    D_{1,n}&=\{(i,j)\in\{1,\ldots,n\}^2:\,i<j,\,
      \theta_{0,n,i}'=\theta_{0,n,j}',\,
      \theta_{n,i}'\neq\theta_{n,j}'\},\\
    D_{2,n}&=\{(i,j)\in\{1,\ldots,n\}^2:\,i<j,\,
      \theta_{0,n,i}'\neq\theta_{0,n,j}',\,
      \theta_{n,i}'=\theta_{n,j}'\}.
  \end{split}
\]
Note that \(D_{1,n}\) and \(D_{2,n}\) do not depend on the chosen representations of \(\theta_{ 0,n}\) and \(\theta_n\). 
 Also define,
\[
  (S_n,T_n):=\Bigl(\sum\{X_{ij}:(i,j)\in D_{1,n}\},
    \sum \{X_{ij}:(i,j)\in D_{2,n}\}\Bigr),
\]
and note that, under $P_{\theta_{0,n}}$ and $P_{\theta_n}$,
\begin{equation}
  \label{eq:SnTn}
  (S_n,T_n)\sim\begin{cases}
  \text{Bin}(|D_{1,n}|,p_n)\times\text{Bin}(|D_{2,n}|,q_n),
    \quad\text{if $X^n\sim P_{\theta_{0,n}}$},\\
  \text{Bin}(|D_{1,n}|,q_n)\times\text{Bin}(|D_{2,n}|,p_n),
    \quad\text{if $X^n\sim P_{\theta_n}$}.
  \end{cases}
\end{equation}
Since $S_n$ and $T_n$ are independent, the likelihood ratio is fixed
as a product two exponentiated binomial random variables: 
\begin{equation}
  \label{eq:pbmlikratio}
  \frac{p_{\theta_n}}{p_{\theta_{0,n}}}(X^n)
  = \biggl(\frac{1-p_n}{p_n}\,\frac{q_n}{1-q_n}\biggr)^{S_n-T_n}
    \biggl(\frac{1-q_n}{1-p_n}\biggr)^{|D_{1,n}|-|D_{2,n}|}
\end{equation} %
This gives rise to the following lemma:
\begin{lemma}
\label{lem:testingpower}
Let $n\geq1$, $\theta_{0,n},\theta_n\in\Theta_n$ be given. Then there
exists a test function $\phi_n:\scrX_n\to[0,1]$ such that,
\[
  P_{\theta_{0,n}}\phi_n(X^n) + P_{\theta_n}(1-\phi_n(X^n))\leq \rho(p_n,q_n)^{|D_{1,n}|+|D_{2,n}|}.
\]
\end{lemma}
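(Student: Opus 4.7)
The plan is to use the standard Neyman-Pearson likelihood ratio test and bound the sum of its type I and type II errors by the Hellinger affinity (Bhattacharyya coefficient) of the two joint distributions, then exploit the independence of edges to factorize the affinity into a product indexed by edges.

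First I would take $\phi_n(X^n) = \mathbf{1}\{p_{\theta_n}(X^n) \geq p_{\theta_{0,n}}(X^n)\}$. For this test, the sum of errors equals
\[
  P_{\theta_{0,n}}\phi_n + P_{\theta_n}(1-\phi_n)
    = \sum_{x \in \scrX_n} \min\bigl(p_{\theta_{0,n}}(x), p_{\theta_n}(x)\bigr),
\]
and the elementary inequality $\min(a,b) \le \sqrt{ab}$ applied pointwise bounds this by the Hellinger affinity
\[
  \sum_{x \in \scrX_n} \sqrt{p_{\theta_{0,n}}(x)\,p_{\theta_n}(x)}.
\]

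Next I would factor this sum. By the independence of the edge variables $\{X_{ij}\}_{i<j}$ under both $P_{\theta_{0,n}}$ and $P_{\theta_n}$, the joint likelihoods are products of Bernoulli likelihoods, and hence the Hellinger affinity factorizes as
\[
  \sum_{x \in \scrX_n}\sqrt{p_{\theta_{0,n}}(x)\,p_{\theta_n}(x)}
   = \prod_{i<j} \rho\bigl(Q_{i,j;n}(\theta_{0,n}'), Q_{i,j;n}(\theta_n')\bigr),
\]
where each factor is the affinity between two Bernoulli distributions with the indicated parameters. For each pair $(i,j)$ not in $D_{1,n} \cup D_{2,n}$, one has $Q_{i,j;n}(\theta_{0,n}') = Q_{i,j;n}(\theta_n')$, so the factor is $1$. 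For $(i,j) \in D_{1,n}$, the parameters are $p_n$ and $q_n$; for $(i,j) \in D_{2,n}$, they are $q_n$ and $p_n$; in either case the factor equals $\rho(p_n, q_n)$ by the symmetry of $\rho$.

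Combining, the product collapses to $\rho(p_n, q_n)^{|D_{1,n}| + |D_{2,n}|}$, yielding the claimed bound. The whole argument is essentially routine; the only point to be careful about is observing that $D_{1,n}$ and $D_{2,n}$ are disjoint (so no edge is double-counted) and that all remaining edges contribute a trivial factor of one, which is immediate from the definitions of $D_{1,n}, D_{2,n}$ and the planted multi-section structure \eqref{eq:pms}.
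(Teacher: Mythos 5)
Your proof is correct, and it reaches the same key quantity as the paper --- the Hellinger affinity (Bhattacharyya coefficient) between $P_{\theta_{0,n}}$ and $P_{\theta_n}$ --- but by a somewhat different and more elementary route. The paper uses the likelihood ratio test and invokes an external result (Le Cam; \cite[lemma 2.7]{kleijn2016}) to bound the sum of errors by the Hellinger transform $P_{\theta_{0,n}}\bigl(p_{\theta_n}/p_{\theta_{0,n}}\bigr)^{1/2}$, and then evaluates that transform by first reducing the likelihood ratio to the two binomial statistics $(S_n,T_n)$ supported on $D_{1,n}$ and $D_{2,n}$ and applying the binomial moment-generating function. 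You instead take the explicit test $\phi_n=\mathbf{1}\{p_{\theta_n}\ge p_{\theta_{0,n}}\}$, note that its error sum is $\sum_x\min\bigl(p_{\theta_{0,n}}(x),p_{\theta_n}(x)\bigr)$, bound this pointwise by $\sqrt{p_{\theta_{0,n}}p_{\theta_n}}$, and then exploit edge-independence to factorize the affinity edge by edge, with each edge in $D_{1,n}\cup D_{2,n}$ contributing $\rho(p_n,q_n)$ and every other edge contributing $1$. Your version is self-contained (no appeal to the cited testing lemma) and avoids the MGF computation entirely; the paper's aggregation into $(S_n,T_n)$ has the side benefit of exhibiting the exact distributional structure \eqref{eq:SnTn} and the likelihood ratio \eqref{eq:pbmlikratio}, which the authors also record for later reference. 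Your care in noting that $D_{1,n}$ and $D_{2,n}$ are disjoint and that all other edges have matching Bernoulli parameters (hence affinity one) is exactly the point that makes the exponent $|D_{1,n}|+|D_{2,n}|$ come out right, so there is no gap.
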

\begin{proof}
The likelihood ratio test $\phi_n(X^n)$ has testing power bounded by
the Hellinger transform,
\[
  P_{\theta_{0,n}}\phi_n(X^n) + P_{\theta_n}(1-\phi_n(X^n))\leq
    P_{\theta_{0,n}}\Bigl(\frac{p_{\theta_n}}{p_{\theta_{0,n}}}(X^n)\Bigr)^{1/2},
\]
(see, \eg\ \cite{LeCam86} and \cite[lemma 2.7]{kleijn2016}).
Then
\[
  \begin{split}
  P_{\theta_{0,n}}\biggl(\frac{p_{\theta_n}}{p_{\theta_{0,n}}}(X^n)\biggr)^{1/2}
    &= P_{\theta_{0,n}}
    \biggl(\frac{p_n}{1-p_n}\,\frac{1-q_n}{q_n}\biggr)^{\ft12(T_n-S_n)}
    \biggl(\frac{1-q_n}{1-p_n}\biggr)^{\ft12(|D_{1,n}|-|D_{2,n}|)}\\
    &= Pe^{\ft12\lambda_nS_n}\,Pe^{-\ft12\lambda_nT_n}
    \biggl(\frac{1-q_n}{1-p_n}\biggr)^{\ft12(|D_{1,n}|-|D_{2,n}|)},
  \end{split}
\]
where $\lambda_n:=\log(1-p_n)-\log(p_n)+\log(q_n)-\log(1-q_n)$ and
$(S_n,T_n)$ are distributed binomially, as in the first part of
(\ref{eq:SnTn}). Using the moment-generating function of the
binomial distribution, we conclude that,
\[
  \begin{split}
  &P_{\theta_{0,n}}\biggl(\frac{p_{\theta_n}}
    {p_{\theta_{0,n}}}(X^n)\biggr)^{1/2}
  =\Bigl(1-p_n
      +p_n\Bigl(\frac{1-p_n}{p_n}\,\frac{q_n}{1-q_n}\Bigr)^{1/2}
      \Bigr)^{|D_{1,n}|}\\
  &   \qquad\times\Bigl(1-q_n
      +q_n\Bigl(\frac{p_n}{1-p_n}\,\frac{1-q_n}{q_n}\Bigr)^{1/2}
      \Bigr)^{|D_{2,n}|}
      \biggl(\frac{1-q_n}{1-p_n}\biggr)^{\ft12(|D_{1,n}|-|D_{2,n}|)}\\
  &= \rho(p_n,q_n)^{|D_{1,n}|+|D_{2,n}|},
  \end{split}
\]
which proves the assertion. %
\end{proof}

\subsection{The sizes of \(D_{1,n}\) and \(D_{2,n}\)}

Let \(\theta_n\in \Theta_{\bm m_{n,\ell}}\). There are \(\frac12\sum_{i=1}^{\ell} m_{n,\ell,i}(m_{n,\ell,i}-1)\) pairs \((i,j), i<j\) with \(\theta_{n,i}=\theta_{n,j}\).

 We have \begin{align*}
 \frac12 n(m_{n,\ell,\min}-1)\le \frac12\sum_{i=1}^{\ell} m_{n,\ell,i}(m_{n,\ell,i}-1)\le  \frac12n(m_{n,\ell,\max}-1).
 \end{align*}

\begin{lemma}\label{lem:cardinalityofD1ncupD2n}
Under \cref{ass:assumptiononthesizeoflabelsets},
\[
|D_{1,n}\cup D_{2,n}|\ge \frac12n(m_{n,\ell_0\wedge \ell,\min}-m_{n,\ell_0\vee \ell,\max}).
\]
\end{lemma}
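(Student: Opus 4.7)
The plan is to recognise the set $D_{1,n}\cup D_{2,n}$ as the symmetric difference of two sets of ``same-label'' pairs and then estimate the size of that symmetric difference.

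First, pick representations $\theta_{0,n}'$ and $\theta_n'$ and introduce
\[
S_0=\set{(i,j): i<j,\ \theta_{0,n,i}'=\theta_{0,n,j}'},\qquad S=\set{(i,j): i<j,\ \theta_{n,i}'=\theta_{n,j}'}.
\]
By the very definition of $D_{1,n}$ and $D_{2,n}$ one has $D_{1,n}=S_0\setminus S$ and $D_{2,n}=S\setminus S_0$, so $D_{1,n}\cup D_{2,n}=S_0\triangle S$. A trivial property of the symmetric difference gives
\[
|D_{1,n}\cup D_{2,n}|=|S_0|+|S|-2|S_0\cap S|\ge \bigl||S_0|-|S|\bigr|.
\]

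Second, I would use the elementary bounds stated just before the lemma: for any $\eta\in\Theta_{n,\ell}$ with class-sizes $\bm m_{n,\ell}$,
\[
\tfrac12 n\bigl(m_{n,\ell,\min}-1\bigr)\le \tfrac12\sum_{i=1}^{\ell} m_{n,\ell,i}(m_{n,\ell,i}-1)\le \tfrac12 n\bigl(m_{n,\ell,\max}-1\bigr),
\]
which count precisely the number of ordered same-label pairs (divided by two). Apply the lower bound to whichever of $\theta_{0,n},\theta_n$ has the smaller number of classes (index $\ell_0\wedge\ell$, hence larger minimum class size) and the upper bound to the other (index $\ell_0\vee\ell$).

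Third, by \cref{ass:assumptiononthesizeoflabelsets}, $m_{n,\ell_0\wedge\ell,\min}\ge m_{n,\ell_0\vee\ell,\max}$, so the labelling with fewer classes has strictly more same-label pairs; in particular the absolute value in the inequality above may be dropped and
\[
\bigl||S_0|-|S|\bigr|\ge \tfrac12 n\bigl(m_{n,\ell_0\wedge\ell,\min}-m_{n,\ell_0\vee\ell,\max}\bigr),
\]
combining with the symmetric-difference bound yields the claim.

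The argument is entirely combinatorial and I do not foresee a genuine obstacle; the only subtlety is making sure the direction of the inequality supplied by \cref{ass:assumptiononthesizeoflabelsets} is aligned with the choice of which labelling gives the lower bound and which the upper bound, which is handled by invoking it with indices $\ell_1=\ell_0\wedge\ell<\ell_0\vee\ell=\ell_2$. The independence of $D_{1,n},D_{2,n}$ from the chosen representations, already noted in \cref{prop:postconvset}, justifies fixing representations at the outset.
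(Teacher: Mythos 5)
Your proposal is correct and is essentially the paper's own argument: the paper also counts the within-class pairs under each labelling (bounded below by $\tfrac12 n(m_{\min}-1)$ and above by $\tfrac12 n(m_{\max}-1)$) and notes that, by \cref{ass:assumptiononthesizeoflabelsets}, the excess of the labelling with fewer classes must land in $D_{1,n}$ (or $D_{2,n}$, depending on which of $\ell_0,\ell$ is smaller), which is exactly your symmetric-difference bound $|S_0\triangle S|\ge \bigl||S_0|-|S|\bigr|$ written out as a case distinction. Your packaging via the symmetric difference is just a cleaner bookkeeping of the same estimate, so there is nothing to add.
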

\begin{proof}
As there are at least \(\frac12n(m_{n,\ell_0,\min}-1)\) elements \((i,j), i<j\) for which \(\theta_{0,n,i}=\theta_{0,n,j}\) and at most \(\frac12n(m_{n,\ell,\max}-1)\) elements \((i,j), i<j\) for which \(\theta_{n,i}=\theta_{n,j}\), in case \(\frac12n(m_{n,\ell_0,\min}-1)>\frac12n(m_{n,\ell,\max}-1)\) going from \(\theta_{0,n}\) to \(\theta_n\) there are at least \begin{align*}
&\frac12n(m_{n,\ell_0,\min}-1)-\frac12n(m_{n,\ell,\max}-1)
=  \frac12n(m_{n,\ell_0,\min}-m_{n,\ell,\max}) 
\end{align*}
elements contributed to \(D_{1,n}\).

 In case 
 \(\frac12n(m_{n,\ell,\min}-1)\ge \frac12n(m_{n,\ell_0,\max}-1)\) going from \(\theta_{n}\) to \(\theta_{0,n}\) there are at least \begin{align*}
\frac12n(m_{n,\ell,\min}-1)-\frac12n(m_{n,\ell_0,\max}-1)
=  \frac12n(m_{n,\ell,\min}-m_{n,\ell_0,\max}).
\end{align*}
elements contributed to \(D_{2,n}\). 
It follows from \cref{ass:assumptiononthesizeoflabelsets} that exactly one of these two options occurs. Hence we have that \(D_{1,n}\cup D_{2,n}\) consist of at least \[
\frac12n(m_{n,\ell_0\wedge \ell,\min}-m_{n,\ell_0\vee \ell,\max})
\]
elements.
\end{proof}

\begin{remark}
The lower bound may be obtained. (Let \(1^{k_1}2^{k_2}\ldots \ell^{k_\ell}\) denote the label vector with \(k_1\) consecutive 1's, followed by \(k_2\) consecutive 2's, etc.) Consider an even \(\ell_0\) and let \(\ell=\ell_0/2\) and \(m=2m_0\),  \(\bm m_{n,\ell_0}\) is the \(\ell_0\)-vector \((m_0,\ldots, m_0)\) and \(\bm m_{n,\ell}\) is the \(\ell\)-vector \((m,\ldots, m)\). 
Let \(\theta_{0,n}=(1^{m_0}2^{m_0}\ldots \ell_0^{m_0})\in \Theta_{\bm m_{n,\ell_0}}\) and \(\theta_n=(1^{2m_0}2^{2m_0}\ldots (\ell_0/2)^{2m_0})\in \Theta_{\bm m_{n,\ell}}\). Note that when \(\theta_{0,n,i}=k\), then \(\theta_{n,i}=\ceiling{k/2}\). It follows that \(D_{1,n}\) is an empty set in this case. On the other hand, if \(\theta_{0,n,i}=2k+1\) and \(\theta_{0,n,j}=2k+2\) with \(k\in\set{0,\ldots,\ell_0/2-1}\), then \(\theta_{n,i}=\theta_{n,j}=k+1\). There are \(\frac12\ell_0m_0^2=\frac12 n m_0=\frac12n|m_0-m|\) of such combinations. Hence \(\# D_{2,n}=\frac12n|m_0-m|\) and the lower bound is actually achieved.  
\end{remark}

Obviously, the bound above is useless when \(\theta_n\) and \(\theta_{0,n}\) both belong to the same set \(\Theta_{n,\ell}\). 

The following lemma relates the number of elements in \( D_{1,n}\cup  D_{2,n}\) to the   \(r_{n}\)-distance (\cref{eq:definitionr}) between \(\theta_{0, n}\) and \(\theta_{ n}\). 

\begin{lemma}\label{lem:lowerboundondintermsofr}
	For \(\theta_{0, n},\theta_{ n}\in\Theta_{n,\ell_0}\) with \(r=r_{n}(\theta_{0, n},\theta_{ n})\), we have 
 \[
|D_{1,n}\cup  D_{2,n}| \ge 2r(m_{n,\ell_0,\min}-r)^+, 
\]    
where for a real number \(x, x^+=\max\set{0, x}\).
\end{lemma}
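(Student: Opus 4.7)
The plan is to exhibit enough pairs in $D_{1,n}$ and $D_{2,n}$ explicitly by picking representations that realise the distance $r$, fixing a pair of labels $(a,b)$ that witnesses the maximum in the definition of $r_n'$, and then counting vertex pairs that straddle this witnessing set in the two natural ways.

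First, I would handle the trivial case $r=0$ where $\theta_{0,n}=\theta_n$ and the right-hand side vanishes. Otherwise, choose representations $\theta_{0,n}'\in\theta_{0,n}$ and $\theta_n'\in\theta_n$ that attain the minimum in \cref{eq:definitionr}, so that $r_n'(\theta_{0,n}',\theta_n')=r$, and pick a pair of labels $a\neq b$ in $\{1,\ldots,\ell_0\}$ that achieves this maximum, i.e.\
\[
S:=\{i:\theta_{0,n,i}'=a,\ \theta_{n,i}'=b\}
\]
has $|S|=r$. Write $A=\{j:\theta_{0,n,j}'=a\}$ and $B=\{j:\theta_{n,j}'=b\}$; since $\theta_{0,n},\theta_n\in\Theta_{n,\ell_0}$, the class sizes of each representation are at least $m_{n,\ell_0,\min}$, so $|A|\ge m_{n,\ell_0,\min}$ and $|B|\ge m_{n,\ell_0,\min}$, while $S=A\cap B$.

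Next I would count $D_{1,n}$: for $i\in S$ and $j\in A\setminus S$, one has $\theta_{0,n,i}'=a=\theta_{0,n,j}'$ but $\theta_{n,i}'=b\neq\theta_{n,j}'$ (because $j\notin B$), so the unordered pair $\{i,j\}$ contributes exactly one ordered pair to $D_{1,n}$. The number of such unordered pairs is $r(|A|-r)\ge r(m_{n,\ell_0,\min}-r)^+$. Symmetrically, for $i\in S$ and $j\in B\setminus S$, one has $\theta_{n,i}'=b=\theta_{n,j}'$ and $\theta_{0,n,i}'=a\neq\theta_{0,n,j}'$ (because $j\notin A$), producing at least $r(m_{n,\ell_0,\min}-r)^+$ pairs in $D_{2,n}$.

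Finally, $D_{1,n}$ and $D_{2,n}$ are disjoint by definition (membership requires opposite equality of the $\theta_{0,n}'$-labels), so
\[
|D_{1,n}\cup D_{2,n}|=|D_{1,n}|+|D_{2,n}|\ge 2r(m_{n,\ell_0,\min}-r)^+,
\]
which is the claim. The only subtlety, and probably the main point one has to be careful about, is checking that the same distinguished set $S$ supplies the needed pairs in \emph{both} $D_{1,n}$ and $D_{2,n}$ by completing $i\in S$ with a partner outside $S$ in $A$ and in $B$ respectively; everything else is routine once the witnessing pair $(a,b)$ is fixed.
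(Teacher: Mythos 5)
Your proposal is correct and follows essentially the same route as the paper's proof: fix representations attaining the minimum in the definition of \(r_n\), take a label pair witnessing the maximum, and count the pairs joining the witness set of size \(r\) to the rest of the corresponding \(\theta_{0,n}'\)-class (giving elements of \(D_{1,n}\)) and to the rest of the corresponding \(\theta_n'\)-class (giving elements of \(D_{2,n}\)), then add using disjointness. The only cosmetic difference is that the paper obtains the \(D_{2,n}\) count by ``reversing the roles'' of \(\theta_{0,n}\) and \(\theta_n\), whereas you carry out both counts explicitly with the same witness set, which is the same argument.
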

\begin{proof} Let \(\theta_{ 0,n}'\) and \(\theta_n'\) be representations of \(\theta_{0,n} \)
 and \(\theta_n\), respectively, so that \(r_n'(\theta_{0,n}', \theta_n')=r\). There are \(s,t\in\set{1,\ldots,\ell_0}\), \(s\neq t\) so that there are exactly \(r\) vertices with \(\theta_{0,n}'\)-label \(s\) and  \(\theta_n'\)-label \(t\). Clearly, when this happens, \(m_{n,\ell_0,s}\ge r\), and trivially, \(m_{n,\ell_0,s}\ge m_{n,\ell_0,1}\ge m_{n,\ell_0,\min }\). It follows that \(D_{1,n}\) 
has at least \(r(m_{n,\ell_0,s}-r)\ge r(m_{n,\ell_0,\min}-r)^+\) elements. Using  that \(D_{1,n}\) and \(D_{2,n}\) are disjoint and reversing the role of \(\theta_{0, n}\)
 and \(\theta_{n}\) and \(D_{1,n}\) and \(D_{2,n}\) gives \(
 |D_{1,n}\cup  D_{2,n}| \ge 2r(m_{n,\ell_0,\min}-r)^+. 
 \)
\end{proof}

\subsection{The distance \(r_{n}\) and the sets \(V_{n,\ell,k}\)}\label{sec:numberofelementsinVnk}

\begin{lemma}\label{lem:boundonm}
Let \(\ell\le \ell'\). Let \(\theta\in \Theta_{\bm  m_{n ,\ell}}\) and \(\eta\in\Theta_{\bm  {\tilde m}_{n ,\ell'}}\). 
	The distance \(r_n(\theta,\eta)\) is bounded by \(m_{n,\ell,\ell}/2\), which in turn is bounded by \(m_{n,\ell,\max}/2\). 
\end{lemma}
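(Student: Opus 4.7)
My plan is to work at the level of representatives and construct a canonical relabeling of $\eta$'s classes that absorbs every large cross-count into a diagonal cell of the contingency table, so that only off-diagonal entries — which alone contribute to $r_n'$ — remain in view and are suitably small. To set this up I fix representatives $\theta' \in \theta$ and $\eta' \in \eta$ and form the counts
\[
N_{a,b} := \#\{i : \theta'_i = a,\ \eta'_i = b\}, \qquad a \in \{1,\ldots,\ell\},\ b \in \{1,\ldots,\ell'\},
\]
with row sums $m_{n,\ell,a}$ and column sums $\tilde{m}_{n,\ell',b}$. Relabeling $\eta$ by a permutation $\pi$ of $\{1,\ldots,\ell'\}$ turns the table into $N_{a,\pi^{-1}(b)}$, so it suffices to find $\pi$ making every off-diagonal entry $N_{a,\pi^{-1}(b)}$ with $a \neq b$ at most $m_{n,\ell,\ell}/2$; the concluding inequality $m_{n,\ell,\ell}/2 \leq m_{n,\ell,\max}/2$ is then immediate from the definition $m_{n,\ell,\max} = \max_{\bm m \in M_{n,\ell}} m_{n,\ell,\ell}$.

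For each row $a$ I would define the heavy set $H(a) := \{c : N_{a,c} > m_{n,\ell,\ell}/2\}$. The row bound $\sum_c N_{a,c} = m_{n,\ell,a} \leq m_{n,\ell,\ell}$ immediately forces $|H(a)| \leq 1$, since two entries each exceeding half of the total would overshoot the total itself. Light rows with $H(a) = \emptyset$ are thus already globally bounded and require no further attention.

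The main obstacle — and the only place where the SBM structure really enters — is establishing that the nonempty $H(a)$'s for distinct rows are pairwise disjoint, which turns $a \mapsto$ (the unique element of $H(a)$) into a partial injection. Were $a_1 \neq a_2$ to share a heavy column $c_0$, the two strict inequalities $N_{a_1,c_0}, N_{a_2,c_0} > m_{n,\ell,\ell}/2$ would give $N_{a_1,c_0} + N_{a_2,c_0} > m_{n,\ell,\ell}$, contradicting the column bound $N_{a_1,c_0} + N_{a_2,c_0} \leq \tilde{m}_{n,\ell',c_0} \leq m_{n,\ell',\max} \leq m_{n,\ell,\min} \leq m_{n,\ell,\ell}$, where the last two inequalities are exactly the content of the paper's standing \cref{ass:assumptiononthesizeoflabelsets} applied to $\ell \leq \ell'$.

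Since $\ell \leq \ell'$, this partial injection extends to a full permutation $\pi^{-1}$ of $\{1,\ldots,\ell'\}$ that sends every heavy row $a$ to the unique member of $H(a)$. For this choice of $\pi$, any off-diagonal entry $N_{a,\pi^{-1}(b)}$ with $a \neq b$ satisfies $\pi^{-1}(b) \neq \pi^{-1}(a)$ by injectivity, so $\pi^{-1}(b) \notin H(a)$ and the entry is at most $m_{n,\ell,\ell}/2$ — regardless of whether $a$ is heavy or light. Minimizing $r_n'$ over representatives then yields $r_n(\theta,\eta) \leq m_{n,\ell,\ell}/2$, completing the plan.
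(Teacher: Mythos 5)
Your construction is essentially the paper's own proof written in contingency-table language: the paper likewise chooses representatives so that every pair of classes whose intersection exceeds \(m_{n,\ell,\ell}/2\) receives matching labels, i.e.\ is moved onto the diagonal. For \(\ell<\ell'\) your write-up is complete and in fact more careful than the paper's, because you verify that two distinct rows cannot share a heavy column, which is exactly what makes the relabelling well defined (the paper assigns labels to its heavy pairs \((A_i,B_i)\) without checking that the \(B_i\) are distinct).

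The gap is at \(\ell=\ell'\), a case the statement includes. \Cref{ass:assumptiononthesizeoflabelsets} is formulated only for \(\ell_1<\ell_2\), so the inequality \(m_{n,\ell',\max}\le m_{n,\ell,\min}\) that you invoke is not available when \(\ell=\ell'\); there a column sum is bounded only by \(m_{n,\ell,\max}\) (the maximum over all of \(M_{n,\ell}\)), which may exceed \(m_{n,\ell,\ell}\) whenever \(M_{n,\ell}\) contains vectors more unbalanced than \(\bm m_{n,\ell}\). The failure is not cosmetic: take \(n=10\), \(\ell=\ell'=2\), \(\bm m_{n,2}=(5,5)\), \(\tilde{\bm m}_{n,2}=(2,8)\) with counts \(N_{1,1}=N_{2,1}=1\), \(N_{1,2}=N_{2,2}=4\); both rows are then heavy in column \(2\), and since the two entries equal to \(4\) lie in one column, at least one of them is off-diagonal under any permutation, so \(r_n(\theta,\eta)=4>m_{n,\ell,\ell}/2=5/2\). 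Thus no choice of \(\pi\) can rescue the stated bound in this configuration — the paper's proof is silent on exactly the same point. Two repairs keep everything the paper needs: your argument as written covers \(\ell<\ell'\) (and also \(\ell=\ell'\) with \(\tilde{\bm m}_{n,\ell'}=\bm m_{n,\ell}\), where the column sums are at most \(m_{n,\ell,\ell}\) directly); and running the identical argument with threshold \(m_{n,\ell,\max}/2\), for which both row and column sums are at most \(m_{n,\ell,\max}\), gives \(r_n(\theta,\eta)\le m_{n,\ell,\max}/2\) in all cases, which is the only form of the lemma invoked later (e.g.\ in \cref{prop:convergenceatapoint}).
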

\begin{proof}
	Note that \(\theta\) (\(\eta\), resp.) is 
	characterised by the partition of 
	\(\set{1,\ldots,n}\) in \(\ell\) (\(\ell'\), resp.) sets of 
	sizes \(m_{n,\ell,1},\ldots,m_{n,\ell,\ell}\) (\(\tilde m_{n,\ell',1},\ldots,\tilde m_{n,\ell',\ell'}\), resp.). 
	Vice versa, for such a partition, one can 
	define a labelling, by assigning a label to 
	each set in the partition, and giving each 
	element of the set that label. Define
	\begin{align*}
	\Xi_\theta=\set{\set{i:\theta_i'=a}:a\in\set{1,\ldots,\ell}},
	\end{align*}
	note that \(\Xi_{\theta}\) is independent of 
	the representation, and define \(\Xi_\eta\) 
	similarly. Now note that there are at most 
	\(\ell\) pairs of sets \((A,B),A\in \Xi_\theta,B\in 
	\Xi_\eta\) with 
	\(|A\cap B|> m_{n,\ell,\ell}/2\), because 
	\(\Xi_\theta\)
	consists of \(\ell\) elements and \(\Xi_\eta\) 
	consists of \(\ell'\ge \ell \) elements,
	and if there are more than \(\ell\) such pairs, 
	then there is an \(A\) in  
	\( \Xi_\theta\) and two different \(B_1,B_2\) in \(\Xi_\eta\) such that 
	\(A\cap B_1\) and \(A\cap B_2\) have more 
	than \(m_{n,\ell,\ell}/2\) elements. But 
	\(B_1\) and \(B_2\) are disjoint, so \(A\) has 
	more than \(  m_{n,\ell,\ell}\) elements, 
	which is a contradiction. 
	
	Let \((A_1,B_1),\ldots,(A_k,B_k)\) the pairs 
	with \(|A_i\cap B_i|> m_{n,\ell,\ell}/2\), 
	with \(k\le \ell\)(\(\le \ell'\)). Define a label \(\theta'\in 
	\theta\) by assigning label \(i\) to the sets 
	\(A_i,\) \(i\in\set{1,\ldots,k}\) and the 
	remaining \(\ell-k\) labels  to the remaining 
	sets in the partition \(\Xi_\theta\) and define 
	\(\eta'\in\eta\) similarly by assigning label 
	\(i\) to the sets \(B_i,\) \(i\in\set{1,\ldots,k}\) 
	and the remaining \(\ell'-k\) labels to the 
	remaining sets in \(\Xi_\eta\). It follows that  
	\(r(\theta, \eta) \le r(\theta',\eta')\le  
	m_{n,\ell,\ell}/2\). By the definition of \(m_{n,\ell,\max}\), \(m_{n,\ell,\ell}\le m_{n,\ell,\max}\). 
\end{proof}

Let \(\theta\in\Theta_{\bm m_{n,\ell}}\) and  define \(
V_{n,\ell,k}(\theta)=\set{\eta\in\Theta_{n,\ell}:r_{n}(\theta,\eta)=k}.
\)

\begin{lemma}\label{lem:upperboundVnlk}
	Let \(\theta\in\Theta_{\bm m_{n,\ell}}\), and \(k\le m_{n,\ell,1}\). Then%
	\begin{equation}\label{eq:upperboundfornumberofelementsinVnkintermsofabinomialexponent}
	|V_{n,\ell,k}(\theta)|\le 2^{k\ell(\ell-1)}\binom{n(\ell-1)}{\ell(\ell-1)k}\le \rh{\frac{2en}{k\ell}}^{\ell(\ell-1)k}.
	\end{equation}
\end{lemma}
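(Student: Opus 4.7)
The plan is to fix a representation $\theta'\in\theta$ and to count the number of representations $\eta'\in\set{1,\ldots,\ell}^n$ satisfying $r_{n}'(\theta',\eta')\le k$. By the definition of $r_n$ every $\eta\in V_{n,\ell,k}(\theta)$ admits some such representation $\eta'$ (in fact one with $r_n'(\theta',\eta')=k$), and different classes $\eta$ yield different representations $\eta'$ since $\eta$ is the equivalence class of $\eta'$; hence counting representations upper bounds $|V_{n,\ell,k}(\theta)|$.

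For fixed $\theta'$, a representation $\eta'$ is completely encoded by the collection of sets $A_{a,b}=\set{i:\theta_i'=a,\ \eta_i'=b}$ indexed by ordered pairs $(a,b)$ with $a\neq b$. The condition $r_n'(\theta',\eta')\le k$ translates directly into $|A_{a,b}|\le k$ for every such pair. These sets also satisfy the disjointness $A_{a,b_1}\cap A_{a,b_2}=\emptyset$ for $b_1\neq b_2$, but I drop this constraint, which only enlarges the count.

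Next, since $k\le m_{n,\ell,1}\le m_{n,\ell,a}$, I bound the number of subsets $A$ of the $m_{n,\ell,a}$ vertices with $\theta'$-label $a$ of size at most $k$ by the standard trick: any such $A$ is contained in some $B$ of size exactly $k$, so the map $(B,C)\mapsto C$ with $|B|=k$ and $C\subseteq B$ surjects onto subsets of size $\le k$, giving the bound $2^k\binom{m_{n,\ell,a}}{k}$. Taking the product over the $\ell(\ell-1)$ ordered pairs yields
\[
|V_{n,\ell,k}(\theta)| \;\le\; 2^{k\ell(\ell-1)}\prod_{a\neq b}\binom{m_{n,\ell,a}}{k}.
\]

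The first stated inequality then follows from Vandermonde's identity in the form $\prod_i\binom{m_i}{t_i}\le\binom{\sum_i m_i}{\sum_i t_i}$: applied with the index set of ordered pairs $(a,b),\,a\neq b$, $t_{(a,b)}=k$ and $m_{(a,b)}=m_{n,\ell,a}$, together with $\sum_{a\neq b}m_{n,\ell,a}=(\ell-1)n$, this gives $\prod_{a\neq b}\binom{m_{n,\ell,a}}{k}\le\binom{n(\ell-1)}{\ell(\ell-1)k}$. The second inequality is the classical $\binom{N}{M}\le(eN/M)^M$ applied with $N=n(\ell-1)$, $M=\ell(\ell-1)k$, which produces $(en/(\ell k))^{\ell(\ell-1)k}$, and absorbing the factor $2^{k\ell(\ell-1)}$ into the base yields $(2en/(\ell k))^{\ell(\ell-1)k}$. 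No step is genuinely difficult; the only subtlety is realizing that the disjointness of the $A_{a,b}$'s can simply be discarded, which is what turns the count into a clean product that Vandermonde can close.
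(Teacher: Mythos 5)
Your proof is correct and takes essentially the same route as the paper's: fix a representation \(\theta'\), encode each \(\eta\) within distance \(k\) by the sets of vertices relabelled from \(a\) to \(b\) (each of size at most \(k\)), bound the count per ordered pair by \(2^k\binom{m_{n,\ell,a}}{k}\), and conclude via \(\binom{a}{b}\binom{c}{d}\le\binom{a+c}{b+d}\) and \(\binom{N}{M}\le (eN/M)^M\). Your explicit handling of the representation/equivalence-class bookkeeping and of dropping the disjointness constraint is, if anything, slightly more careful than the paper's wording, but the argument is the same.
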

Note that \cref{eq:upperboundfornumberofelementsinVnkintermsofabinomialexponent} is well defined when \(k\ell \le n\), which is the case when \(k\le m_{n,\ell,1}\). 
\begin{proof}
	When \(\ell=1\), the lemma is trivial. Let \(\ell>1\). Choose a representation \(\theta'\) of \(\theta\) so that exactly \(m_{n,\ell,i}\) labels of \(\theta'\) are \(i\), \(i\in\set{1,\ldots,\ell}\). Let us count the number of ways in which we can change the labelling \(\theta'\) into a label \(\eta'\), while keeping \(r_n(\theta',\eta')\le k\). 
Let \(i, j\in \set{1,\ldots,\ell}, i\neq j\). We can choose \(\binom{m_{n,\ell,i}}{k}\) vertices with \(\theta'\)-label \(i\) and either retain the label \(i\) or give them them the label \(j\). This is possible in \(2^k\) ways. There are \(\ell-1\) labels \(j\neq i\), and there are \(\ell\) labels \(i\). Hence  \(V_{n,\ell,k}\) has at most 
\[
2^{k\ell(\ell-1)}\prod_{i=1}^\ell \binom{m_{n,\ell,i}}{k}^{\ell-1}
\]
elements. 

Using \(\binom ab \binom cd\le \binom{a+c}{b+d}\), the number of elements in \(V_{n,\ell,k}\) is bounded by 
\[
2^{k\ell(\ell-1)}\binom{n(\ell-1)}{\ell(\ell-1)k}.
\]
\Cref{eq:upperboundfornumberofelementsinVnkintermsofabinomialexponent} is in turn  bounded by\[
\rh{\frac{2en}{k\ell}}^{\ell(\ell-1)k}. 
\]
\end{proof}

\subsection{The distance \(m_n\)}

The distance \(m_n\), restricted to \(\Theta_{n,\ell}\) has the following relationship with \(r_{n}\):

\begin{lemma}\label{eq:equivalencemandr}
	For all \(\theta,\eta\in\Theta_{n,\ell}\), \begin{equation}\label{eq:assertionoflemmaequivalencemandr}
	r_{n}(\theta,\eta)\le m(\theta,\eta)\le \ell(\ell-1)r_{n}(\theta,\eta). 
	\end{equation}
\end{lemma}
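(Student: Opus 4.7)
The plan is to establish the two inequalities separately by a direct comparison of $r_n'$ and $m_n'$ on suitably chosen representations, exploiting that for any pair of representations $(\theta',\eta')$, the Hamming count $m_n'(\theta',\eta')$ decomposes as a sum indexed by ordered pairs of distinct labels while $r_n'(\theta',\eta')$ is the maximum of the same summands. No nontrivial combinatorics or invariance argument is needed; the entire content lies in choosing the right representative pair on each side.

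For the left inequality $r_n(\theta,\eta)\le m_n(\theta,\eta)$, I would pick representations $\theta^{*}\in\theta,\eta^{*}\in\eta$ that achieve the minimum defining $m_n(\theta,\eta)$. Since
\[
m_n'(\theta^{*},\eta^{*})=\sum_{a\neq b}\#\{i:\theta_i^{*}=a,\,\eta_i^{*}=b\}\ge \max_{a\neq b}\#\{i:\theta_i^{*}=a,\,\eta_i^{*}=b\}=r_n'(\theta^{*},\eta^{*}),
\]
and $r_n(\theta,\eta)\le r_n'(\theta^{*},\eta^{*})$ by definition of the infimum over representations, the desired bound follows.

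For the right inequality $m_n(\theta,\eta)\le \ell(\ell-1)r_n(\theta,\eta)$, I would instead pick representations $\theta^{**},\eta^{**}$ that achieve the minimum defining $r_n(\theta,\eta)$. Then on the same decomposition
\[
m_n'(\theta^{**},\eta^{**})=\sum_{a\neq b}\#\{i:\theta_i^{**}=a,\,\eta_i^{**}=b\}\le \ell(\ell-1)\,r_n'(\theta^{**},\eta^{**}),
\]
because the sum runs over the $\ell(\ell-1)$ ordered pairs with $a\neq b$ and each summand is at most $r_n'(\theta^{**},\eta^{**})=r_n(\theta,\eta)$. By definition of the minimum, $m_n(\theta,\eta)\le m_n'(\theta^{**},\eta^{**})$, and the claim follows.

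The only potential subtlety — and the place where a careless proof could go wrong — is that $r_n$ and $m_n$ are defined as minima over representations taken independently, so the two optimal pairs need not coincide. Addressing this is what forces the two directions to use different representations; this separation is essential, but once it is acknowledged the calculation is immediate.
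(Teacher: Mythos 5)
Your proof is correct and follows essentially the same route as the paper: both rest on the pointwise comparison $r_n'(\theta',\eta')\le m_n'(\theta',\eta')\le \ell(\ell-1)\,r_n'(\theta',\eta')$ obtained by decomposing the Hamming count over ordered pairs of distinct labels, then passing to the minimizing representations for $m_n$ on one side and for $r_n$ on the other (the paper phrases this verbally, treating $\ell=1$ as a trivial separate case). Your explicit remark that the two directions must use different optimal representative pairs is exactly the point the paper's argument also relies on, so no gap remains.
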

\begin{proof}
	When \(\ell=1\), \(\Theta_{n,\ell}\) consist of only one element, and \cref{eq:assertionoflemmaequivalencemandr} is trivial. Now suppose \(\ell>1\). 
	Every representation \(\theta'\) of \(\theta\) and \(\eta'\) of \(\eta\) statifies \(r'_{n}(\theta',\eta')\ge r_{n}(\theta,\eta)=:r\). Hence, for fixed representations \(\theta'\) and \(\eta'\), there are at least \(r\) vertices with \(\theta'\) label \(a\) have \(\eta'\) label \(b\), for some \(a, b\in\set{1,\ldots, \ell}, a\neq b\). Hence \(m_n'(\theta',\eta')\ge r\). So \(m_n(\theta,\eta)\ge r\). On the other hand, if \(\theta'\) and \(\eta'\) are representations of \(\theta\) and \(\eta\), respectively, such that  \(r'_{n,\ell}(\theta',\eta')= r_{n,\ell}(\theta,\eta)=:r\), then there are at most \(r\) vertices with \(\theta'\) label 1, that have \(\eta'\) label 2, and at most \(r\) vertices with \(\theta'\) label \(1\) that have \(\eta'\) label 3, etc. As there are \(\ell-1\) labels not equal to 1, and \(\theta'\) has \(\ell\) labels, \(m(\theta,\eta)\le m'(\theta',\eta')\le \ell(\ell-1)r\).
\end{proof}

\section{Auxiliary lemmas}\label{sec:aux}

\begin{lemma}\label{lem:inequalityofanepowerdividedbyoneminusanepower}
	Let \(C\ge 2 \). For all  \(x\ge \sqrt{2/C}\),
	\begin{align*}
	\frac{e^{-Cx}}{1-e^{-x}}\le e^{-Cx/4}.
	\end{align*}
\end{lemma}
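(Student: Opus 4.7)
The plan is to rewrite the inequality additively. Multiplying through by $e^{Cx/4}(1-e^{-x}) > 0$, the claim becomes
$$e^{-x} + e^{-3Cx/4} \leq 1,$$
which is the cleaner target. First I would observe that $h(x) := e^{-x} + e^{-3Cx/4}$ has derivative $-e^{-x} - (3C/4)e^{-3Cx/4} < 0$ on $(0,\infty)$, so $h$ is strictly decreasing. It therefore suffices to verify the inequality at the left endpoint $x_0 = \sqrt{2/C}$. Since $C \geq 2$ we have $x_0 \in (0,1]$, and the identity $Cx_0 = 2/x_0$ reduces the target to
$$e^{-x_0} + e^{-3/(2x_0)} \leq 1.$$

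Next, I would split the budget $1$ between the two terms as $(1 - x_0/2) + x_0/2$. For the first piece, the standard estimate $1 - e^{-y} \geq y - y^2/2$ on $[0,\infty)$ (proved by noting that $f(y) = 1 - e^{-y} - y + y^2/2$ satisfies $f(0) = f'(0) = 0$ and $f''(y) = 1 - e^{-y} \geq 0$), combined with $y - y^2/2 \geq y/2$ for $y \in (0,1]$, yields $e^{-x_0} \leq 1 - x_0/2$. For the second piece, it remains to prove $e^{-3/(2x_0)} \leq x_0/2$, equivalently $3/(2x_0) \geq \log(2/x_0)$. Letting $z = 2/x_0 \geq 2$, this becomes $3z/4 \geq \log z$ on $[2,\infty)$; the function $z \mapsto 3z/4 - \log z$ has derivative $3/4 - 1/z > 0$ on this range and takes the positive value $3/2 - \log 2$ at $z = 2$, so the bound holds throughout.

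The main obstacle I expect is not any single computation but selecting the correct split at the endpoint. The inequality is essentially tight as $C \to \infty$ (since $x_0 \to 0$ forces $e^{-x_0} \to 1$), so one has to apportion the two exponentials against the budget $1$ in just the right proportions: the symmetric split $(1-x_0/2) + x_0/2$ leaves exactly enough room for the Taylor estimate on $e^{-x_0}$ while still dominating the super-exponentially small term $e^{-3/(2x_0)}$. The monotonicity reduction to $x_0$ is what keeps the argument short; without it one would be tempted to split into cases $x \leq 1$ versus $x \geq 1$, which works but is clumsier.
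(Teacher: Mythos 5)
Your proof is correct, but it takes a genuinely different route from the paper's. You clear denominators to reduce the claim to $e^{-x}+e^{-3Cx/4}\le 1$, observe this left-hand side is decreasing in $x$, and therefore only need the inequality at the endpoint $x_0=\sqrt{2/C}$, where the identity $Cx_0=2/x_0$ lets you split the budget as $\bigl(1-x_0/2\bigr)+x_0/2$ and finish with the elementary bounds $e^{-x_0}\le 1-x_0/2$ (valid since $x_0\in(0,1]$) and $3z/4\ge\log z$ for $z=2/x_0\ge 2$; all of these steps check out, and the hypothesis $C\ge 2$ enters exactly where you use $x_0\le 1$. The paper instead keeps the quotient form: it bounds the denominator via $1-e^{-x}=\int_0^x e^{-y}\,dy\ge xe^{-x}$, uses $C-1\ge C/2$ to absorb the resulting $e^{x}$ factor, rewrites the hypothesis $x\ge\sqrt{2/C}$ as $Cx/4\ge 1/(2x)$, and concludes with the uniform bound $ye^{-y/2}\le 2/e<1$ applied at $y=1/x$. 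The paper's argument gives the bound pointwise for every admissible $x$ without any monotonicity reduction and is slightly shorter; yours isolates more clearly where the constraint $x\ge\sqrt{2/C}$ is sharp (only the endpoint matters) and replaces the slightly slick $ye^{-y/2}\le 2/e$ step by two very standard estimates, at the cost of having to choose the right split of $1$ at the endpoint.
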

\begin{proof}
	Note that \begin{align*}
	1-e^{-x}&=\int_0^{x}e^{-y}dy\\
	&\ge xe^{-x}.
	\end{align*}
	Using this  and the fact that \(C-1\ge C/2\), we see that
	\begin{align*}
	\frac{e^{-Cx}}{1-e^{-x}}\le & \frac{e^{-Cx}}{xe^{-x}}= \frac{e^{-(C-1)x}}{x}\le \frac{e^{-Cx/2}}{x}=\frac{e^{-Cx/4}}{x}e^{-Cx/4}.
	\end{align*}
	As \(x\ge \sqrt{2/C}\)  is equivalent to \(Cx/4\ge x^{-1}/2\) and \(x>0\), we have 
	\begin{align*}
	\frac{e^{-Cx}}{1-e^{-x}}\le & x^{-1}e^{-x^{-1}/2}e^{-Cx/4}.
	\end{align*}
	One verifies that \(f(y)=ye^{-y/2}\) attains its maximum on \(\RR\) at \(y=2\) and \(f(2)=2/e<1\). It now follows that 
	\[\frac{e^{-Cx}}{1-e^{-x}}\le e^{-Cx/4}.\] 
\end{proof}

\begin{lemma}\label{lem:sqrtoneminusxislessthanorequaltooneminusxovertwo}
	For \(x\in[0,1]\), \(\sqrt{1-x}\le 1-x/2\). 
\end{lemma}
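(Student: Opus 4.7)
The plan is to prove the inequality by squaring both sides, which is valid because both sides are non-negative on $[0,1]$.

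First I would observe that for $x \in [0,1]$, the left-hand side $\sqrt{1-x}$ is well-defined and non-negative, while the right-hand side $1 - x/2$ lies in $[1/2, 1]$ and is therefore strictly positive. Consequently, the inequality $\sqrt{1-x} \le 1 - x/2$ is equivalent to the squared inequality
\[
1 - x \le \left(1 - \tfrac{x}{2}\right)^2.
\]

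Next I would expand the right-hand side to get $1 - x + x^2/4$, so the inequality reduces to $0 \le x^2/4$, which holds trivially for every real $x$ and in particular for $x \in [0,1]$. Reversing the squaring step (using the non-negativity established above) yields the claim.

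I do not anticipate any obstacle; the argument is a one-line algebraic manipulation. An alternative presentation would note that the map $x \mapsto \sqrt{1-x}$ is concave on $[0,1]$ and that $1 - x/2$ is its tangent line at $x = 0$, so the inequality is an instance of the tangent-line bound for concave functions, but the direct algebraic proof is shorter and self-contained.
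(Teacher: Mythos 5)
Your proof is correct, but it proceeds differently from the paper's. You square both sides: since both \(\sqrt{1-x}\ge 0\) and \(1-x/2>0\) on \([0,1]\), the claimed inequality is equivalent to \(1-x\le (1-x/2)^2 = 1-x+x^2/4\), i.e.\ to \(0\le x^2/4\), which is trivially true; the justification of the squaring step via non-negativity of both sides is stated and suffices. The paper instead uses a derivative comparison: with \(f(x)=\sqrt{1-x}\) and \(g(x)=1-x/2\), one has \(f(0)=g(0)\) and \(f'(x)=-\tfrac{1}{2\sqrt{1-x}}\le -\tfrac12=g'(x)\) on \([0,1]\), whence \(f\le g\). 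Your algebraic route is shorter, entirely elementary (no calculus), and avoids the minor technicality that \(f'\) is undefined at \(x=1\); the paper's monotone-derivative argument generalizes more readily to tangent-line bounds for concave functions, which is exactly the alternative viewpoint you mention at the end. Either argument is a complete and valid proof of the lemma.
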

\begin{proof}
	Define \(f(x)=\sqrt{1-x}\) and \(g(x)=1-x/2\). Note that \(f(0)=g(0)\) and \(f'(x)=-\frac1{2\sqrt{1-x}}\le -\frac12 = g'(x)\), for all \(x\in[0,1]\). It follows that \(g(x)\ge f(x)\) for all \(x\in[0,1]\). 
\end{proof}

\begin{lemma}\label{lem:oneplusxdivrtothepowerrissmallerthanetothepowerx} 
	For all positive integers \(r\) and real numbers \(x>-r,\) \((1+x/r)^r \le e^x\).
\end{lemma}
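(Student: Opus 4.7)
The plan is to reduce to the standard logarithmic inequality $\log(1+y) \le y$ valid for all $y > -1$, which I would establish first by a one-variable calculus argument. Concretely, set $f(y) = y - \log(1+y)$ on $(-1, \infty)$; then $f(0) = 0$ and $f'(y) = 1 - 1/(1+y) = y/(1+y)$, which is negative on $(-1, 0)$ and positive on $(0, \infty)$, so $f$ attains its global minimum $0$ at $y=0$, giving $\log(1+y) \le y$ throughout $(-1, \infty)$.

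With this in hand, the main step is straightforward. Since $x > -r$ and $r$ is a positive integer, $y := x/r$ satisfies $y > -1$, so $1 + x/r > 0$ and we may take logarithms. Applying the inequality above yields
\[
r \log\!\left(1 + \frac{x}{r}\right) \le r \cdot \frac{x}{r} = x.
\]
Exponentiating both sides (which preserves the inequality) gives
\[
\left(1 + \frac{x}{r}\right)^r = \exp\!\left(r \log\!\left(1 + \frac{x}{r}\right)\right) \le e^x,
\]
as asserted.

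There is no real obstacle here; the only subtlety is making sure the hypothesis $x > -r$ is invoked precisely to guarantee that $1 + x/r$ is strictly positive, so that taking the logarithm is legitimate. Note that integrality of $r$ is not actually needed for the argument, so the same proof establishes the slightly stronger statement $(1 + x/r)^r \le e^x$ for every real $r > 0$ and every $x > -r$.
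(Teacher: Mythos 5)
Your proof is correct and uses essentially the same idea as the paper: a one-variable calculus comparison showing $r\log(1+x/r)\le x$ on $x>-r$ (you phrase it via $\log(1+y)\le y$ with $y=x/r$, the paper compares $r\log(1+x/r)$ with $x$ directly), followed by exponentiation. Your remark that integrality of $r$ is not needed is also accurate.
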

\begin{proof}
	Let for \(x>-r\), \(f(x)= r\log(1+x/r)\) and \(g(x)=x\). Then \(f'(x)=\frac1{1+x/r}\) and \(g'(x)=1\). It follows that \(f'(x)\le g'(x)\), when \(x\ge 0\), \(f'(x)>g'(x) \) when \(-n<x<0\) and \(f(0)=g(0)\). It follows that \(f(x)\le g(x)\) for all \(x>-r\). As \(y\to e^y\) is increasing, for all real \(y\), it follows that for all  \(x>-n\), \((1+x/r)^r = e^{f(x)}\le e^{g(x)}=e^x\).
\end{proof}

  \printbibliography
\end{document}